\documentclass[11pt]{amsart}

\usepackage{tikz,amsmath,amssymb}
\usepackage{hyperref}
\hypersetup{
    colorlinks=true,
    linkcolor=blue,
    citecolor=magenta,
    filecolor=magenta,      
    urlcolor=magenta,
}
\usepackage{float}
\usepackage{latexsym,bbm,mathptmx,microtype,cite}

\hoffset=0in 
\voffset=0in
\oddsidemargin=0in
\evensidemargin=0in
\topmargin=0.3in 
\headsep=0.15in 
\headheight=8pt
\textwidth=6.5in
\textheight=8.5in

\newtheorem{theorem}{Theorem}[section]
\newtheorem{prop}[theorem]{Proposition}

\newtheorem{lemma}[theorem]{Lemma}
\newtheorem{corollary}[theorem]{Corollary}

\theoremstyle{definition}

\newtheorem{definition}[theorem]{Definition}

\newtheorem{example}[theorem]{Example}

\makeatletter
\newtheorem*{rep@theorem}{\rep@title}\newcommand{\newreptheorem}[2]{%
\newenvironment{rep#1}[1]{%
\def\rep@title{\bf #2 \ref{##1}}%
\begin{rep@theorem}}%
{\end{rep@theorem}}}
\makeatother
\newreptheorem{theorem}{Theorem}

\renewcommand\emptyset{\varnothing}

\newcommand{\conv}[1]{\mathrm{conv}\left \{ #1\right\}}

\newcommand{\R}{\mathbb{R}}
\newcommand{\Z}{\mathbb{Z}}

\newcommand{\Q}{\mathbb{Q}}
\newcommand{\bv}{\mathbf{v}}

\newcommand{\bw}{\mathbf{w}}
\newcommand{\bW}{\mathbf{W}}
\newcommand{\br}{\mathbf{r}}
\newcommand{\ba}{\mathbf{a}}
\newcommand{\bp}{\mathbf{p}}

\newcommand{\vol}{\mathrm{vol}}

\newcommand{\Ehr}{\mathrm{Ehr}}

\newcommand{\cone}[1]{\mathrm{cone}\left(#1\right)}
\newcommand{\boxx}[1]{\mathrm{Box}\left(#1\right)}


\newcommand\commentout[1]{}

\begin{document}

\title{Decompositions of Ehrhart $h^*$-polynomials for rational polytopes}
\author{Matthias Beck}
\address{Department of Mathematics, San Francisco State University \& Mathematisches Institut, Freie Universit\"at Berlin\\
\url{http://math.sfsu.edu/beck/}}
\email{mattbeck@sfsu.edu}

\author{Benjamin Braun}
\address{Department of Mathematics\\
         University of Kentucky\\
\url{https://sites.google.com/view/braunmath/}}
\email{benjamin.braun@uky.edu}

\author{Andr\'es R. Vindas-Mel\'endez}
\address{Department of Mathematics\\
         University of Kentucky\\
\url{https://ms.uky.edu/\~arvi222}}
\email{andres.vindas@uky.edu}

\date{\today}

\begin{abstract}
The Ehrhart quasipolynomial of a rational polytope $P$ encodes the number of integer lattice
points in dilates of $P$, and the $h^*$-polynomial of $P$ is the numerator of the
accompanying generating function.
We provide two decomposition formulas for the $h^*$-polynomial of a rational polytope.  
The first decomposition generalizes a theorem of Betke and McMullen for lattice polytopes.  
We use our rational Betke--McMullen formula to provide a novel proof of Stanley's Monotonicity Theorem for the $h^*$-polynomial of a rational polytope. 
The second decomposition generalizes a result of Stapledon, which we use to provide rational
extensions of the Stanley and Hibi inequalities satisfied by the coefficients of the $h^*$-polynomial for lattice polytopes.  
Lastly, we apply our results to rational polytopes containing the origin whose duals are lattice polytopes. 
\end{abstract}

\maketitle


\section{Introduction}\label{sec:intro}

For a $d$-dimensional rational polytope $P \subset \R^{d}$ (i.e., the convex hull of finitely
many points in $\Q^d$) and a positive integer $t$, let $L_P(t)$ denote the number of integer lattice points in $tP$. 
Ehrhart's theorem~\cite{Ehrhart} tells us that $L_P(t)$ is of the form $\vol(P) \,
t^d+k_{d-1}(t) \, t^{d-1}+\cdots+k_1(t)\, t+k_0(t)$, where $k_0(t),k_1(t),\dots,k_{d-1}(t)$ are periodic functions in $t$.
We call $L_P(t)$ the \emph{Ehrhart quasipolynomial} of $P$, and Ehrhart proved that each
period of $k_0(t),k_1(t),\dots,k_{d-1}(t)$ divides
the \emph{denominator} $q$ of $P$, which is the  least  common  multiple  of all its  vertex  coordinate denominators.  
The \emph{Ehrhart series} is the rational generating function
\[
  \Ehr(P;z):=\sum_{t\geq 0}L(P;t) \, z^t=\frac{h^*(P;z)}{(1-z^q)^{d+1}} \, ,
\]
where $h^*(P;z)$ is a polynomial of degree less than $q(d+1)$, the \emph{$h^*$-polynomial}
of~$P$.\footnote{
Note that the $h^*$-polynomial depends not only on $q$ (though that is implicitly determined by $P$), but
also on our choice of representing the rational function $\Ehr(P;z)$, which in our form will not be in
lowest terms.
} 

Our first main contributions are generalizations of two well-known decomposition formulas of the $h^*$-polynomial for lattice polytopes due to Betke--McMullen~\cite{BetkeMcMullen} and Stapledon~\cite{StapledonInequalities}.
(All undefined terms are specified in the sections below.)

\begin{reptheorem}{decomp}
  For a triangulation $T$ with denominator $q$ of a rational $d$-polytope $P$,
  \[
    \Ehr(P;z)=\frac{\sum_{\Omega\in T}B(\Omega;z) \, h(\Omega;z^q)}{(1-z^q)^{d+1}} \, .
  \]
\end{reptheorem}

\begin{reptheorem}{h^*-polynomial}
Consider a rational $d$-polytope $P$ that contains an interior point $\frac{\ba}{\ell}$, where $\ba \in \Z^d$ and $\ell\in \Z_{>0}$. 
Fix a boundary triangulation $T$ of $P$ with denominator $q$.
Then  
$$h^*(P;z)=\frac{1-z^q}{1-z^\ell} \sum_{\Omega\in T}\left(B(\Omega;z)+B(\Omega';z)\right)
h(\Omega;z^q) \, .$$ 
\end{reptheorem}

Our second main result is a generalization of inequalities provided by Hibi~\cite{HibiEhrhartPolynomials} and Stanley~\cite{StanleyHilbertFunctionCohenMacaulay} that are satisfied by the coefficients of the $h^*$-polynomial for lattice polytopes.

\begin{reptheorem}{inequalities}
Let $P$ be a rational $d$-polytope with denominator $q$ and let $s:=\deg{h^*(P;z)}$ . 
The $h^*$-vector $(h^*_0,\dots,h^*_{q(d+1)-1})$ of $P$ satisfies the following inequalities: 
\begin{align}
      h^*_0+\cdots+h^*_{i+1}\geq h^*_{q(d+1)-1}+\cdots+h^*_{q(d+1)-1-i} \, , \qquad &i=0,\dots,
\left\lfloor \frac{q(d+1)-1}{2} \right\rfloor -1 \, , \label{ineq1} \\
    h^*_s+\cdots+h^*_{s-i}\geq h^*_0+\cdots+h^*_i \, , \qquad &i=0,\dots, q(d+1)-1 \, . \label{ineq2}
\end{align}
\end{reptheorem}

Inequality (\ref{ineq1}) is a generalization of a theorem by Hibi~\cite{HibiEhrhartPolynomials} for lattice polytopes, and (\ref{ineq2}) generalizes an inequality given by Stanley~\cite{StanleyHilbertFunctionCohenMacaulay} for lattice polytopes, namely the case when $q=1$. 
Both inequalities follow from the \emph{$a/b$-decomposition} of the $\overline{h^*}$-polynomial for
rational polytopes given in Theorem~\ref{h-star decomp} in Section \ref{sec:boundarytriangulations}, which in turn generalizes results (and
uses rational analogues of techniques) by Stapledon~\cite{StapledonInequalities}. 
Stapledon's $a/b$-decomposition has been used by different authors to study connections to unimodality, dilated polytopes, open polytopes, order polytopes, and connections to chromatic polynomials~\cite{BeckJochemkoMcCullough, JochemkoReal-Rootedness, JochemkoSymmetric, Leon}.

This paper is structured as follows.
In Section~\ref{sec:setup} we provide notation and background.
In Section~\ref{sec:betkemcmullen} we prove Theorem~\ref{decomp} and use this to give a novel
proof of Stanley's Monotonicity Theorem.
In Section~\ref{sec:stapledon} we prove Theorems~\ref{h^*-polynomial} and~\ref{inequalities}.
We conclude in Section~\ref{sec:applications} with some applications.


\section{Set-Up and Notation}\label{sec:setup}

A \emph{pointed simplicial cone} is a set of the form 
$$K(\bW)=\left\{\sum_{i=1}^n \lambda_i{\bw_i}:\lambda_i\geq 0\right\},$$
where $\bW:=\{\bw_1,\dots,\bw_n\}$ is a set of $n$ linearly independent vectors in $\R^d$.
If we can choose $\bw_i\in \Z^d$ then $K(\bW)$ is a \emph{rational} cone and we assume this throughout this paper. 
Define the \emph{open parallelepiped} associated with $K(\bW)$  as \begin{equation}
  \boxx{\bW}:=\left\{\sum_{i=1}^n\lambda_i\bw_i:0<\lambda_i<1\right\}.  
\end{equation}
Observe that we have the natural involution 
$\iota: \boxx{\bW}\cap \Z^{d} \rightarrow \boxx{\bW}\cap \Z^{d}$
given by
\begin{equation}\label{eq:2}
    \iota\left(\sum_i \lambda_i \bw_i\right) := \sum_i(1-\lambda_i)\bw_i \, .
    \end{equation}
We set $\boxx{\{0\}}:=\{0\}$.

Let $u:\R^{d}\rightarrow \R$ denote the projection onto the last coordinate.
We then define the \emph{box polynomial} as 
\begin{equation}
    B(\bW;z):=\sum_{\bv\in \boxx{\bW}\cap \Z^{d}} z^{u(\bv)}.
\end{equation}
If $\boxx{\bW}\cap \Z^{d}=\emptyset$, then we set $B(\bW;z)=0$. 
We also define $B(\emptyset;z)=1$.

\begin{example}
Let $\bW=\{(1,3),(2,3)\}$.
Then $$\boxx{\bW}=\{\lambda_1(1,3)+\lambda_2(2,3):0<\lambda_1,\lambda_2<1\}.$$
Thus $\boxx{\bW} \cap \Z^2=\{(1,2),(2,4)\}$ and its associated box polynomial is 
$$B(\bW;z)=z^2+z^4.$$
\end{example}

\begin{lemma}\label{lemma:box}
$\displaystyle B(\bW;z)=z^{\sum_i u({\bw_i})}B\left(\bW;\tfrac{1}{z}\right).$
\end{lemma}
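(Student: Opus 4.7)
The plan is to prove this as a direct application of the involution $\iota$ introduced in equation~\eqref{eq:2}. The statement is essentially a palindromicity/reciprocity identity for the box polynomial, and such identities almost always come from an involution on the underlying lattice point set that reverses the relevant statistic.

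First I would verify the two degenerate cases: if $\boxx{\bW} \cap \Z^d = \emptyset$, both sides equal $0$, and if $\bW = \emptyset$, both sides equal $1$ (with the empty sum $\sum_i u(\bw_i) = 0$). In the main case, I would compute the effect of $\iota$ on the projection $u$. For any $\bv = \sum_i \lambda_i \bw_i \in \boxx{\bW} \cap \Z^d$, we have
\[
  \iota(\bv) \;=\; \sum_i (1-\lambda_i)\bw_i \;=\; \sum_i \bw_i - \bv \, ,
\]
and applying the linear functional $u$ yields
\[
  u(\iota(\bv)) \;=\; \sum_i u(\bw_i) - u(\bv) \, .
\]

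Second, I would use that $\iota$ is an involution, hence a bijection, on $\boxx{\bW} \cap \Z^d$ (noting that $\iota$ lands in $\Z^d$ because $\iota(\bv) = \sum_i \bw_i - \bv$ is an integer combination of integer vectors). Therefore I can reindex the sum defining $B(\bW;z)$ via $\bv \mapsto \iota(\bv)$:
\[
  B(\bW;z) \;=\; \sum_{\bv \in \boxx{\bW}\cap \Z^d} z^{u(\iota(\bv))} \;=\; \sum_{\bv} z^{\sum_i u(\bw_i) - u(\bv)} \;=\; z^{\sum_i u(\bw_i)} \sum_{\bv} z^{-u(\bv)} \;=\; z^{\sum_i u(\bw_i)}\, B\!\left(\bW;\tfrac{1}{z}\right) .
\]

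There is no real obstacle here; the only thing to be careful about is verifying that $\iota$ preserves the set $\boxx{\bW} \cap \Z^d$ (which follows because $0 < \lambda_i < 1$ iff $0 < 1-\lambda_i < 1$, and integrality is preserved as noted above). The proof is essentially a one-line bijective argument once the action of $u \circ \iota$ is recorded.
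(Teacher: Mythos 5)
Your proof is correct and follows essentially the same route as the paper: both use the involution $\iota$ from~\eqref{eq:2}, the identity $u(\iota(\bv))=\sum_i u(\bw_i)-u(\bv)$, and the resulting reindexing of the sum defining $B(\bW;z)$. The extra checks you record (degenerate cases, integrality and box-preservation under $\iota$) are left implicit in the paper but are consistent with its argument.
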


\begin{proof}
Using the involution $\iota$,
\begin{align*}
z^{\sum_i u({\bw_i})}B\left(\bW;\frac{1}{z}\right)&=\sum_{\bv\in \boxx{\bW}\cap \Z^{d}}z^{\sum_i u({\bw_i})-u(\bv)}
=\sum_{\bv\in \boxx{\bW}\cap \Z^{d}}z^{u(\iota(\bv))}=B(\bW;z) \, . \qedhere
\end{align*}
\end{proof}

Next, we define the \emph{fundamental parallelepiped} $\Pi(\bW)$ to be a half-open variant of $\boxx{\bW}$, namely, 
$$\Pi(\bW):=\left\{\sum_{i=1}^n\lambda_i\bw_i:0\leq\lambda_i<1\right\}.  $$

We also want to cone over a polytope $P$. 
If $P\subset \R^d$ is a rational polytope with vertices $\bv_1,\dots,\bv_n\in \Q^d$, we lift the vertices into $\R^{d+1}$ by appending a $1$ as the last coordinate. 
Then 
\begin{equation}
    \cone{P}=\left\{\sum_{i=1}^n\lambda_i(\bv_i,1): \lambda_i\geq 0\right\}\subset \R^{d+1}.
\end{equation}
We say a point is at \emph{height} $k$ in the cone if the point lies on $\cone{P} \cap \{\mathbf{x}:x_{d+1}=k\}$. 
Note that $qP$ is embedded in $\cone{P}$ as $\cone{P}\cap \{\mathbf{x}:x_{d+1}=q\}$.

A \emph{triangulation} $T$ of a $d$-polytope $P$ is a subdivision of $P$ into simplices (of all dimensions).
If all the vertices of $T$ are rational points, define the \emph{denominator} of $T$ to be the least common multiple of all the vertex coordinate denominators of the faces of $T$.
For each $\Delta \in T$, we define the \emph{$h$-polynomial} of $\Delta$ with respect to $T$ as
\begin{equation}\label{h-vector}
   h_T(\Delta;z):=(1-z)^{d-\dim(\Delta)}\sum_{\Delta \subseteq \Phi \in T} \left(\frac{z}{1-z}\right)^{\dim(\Phi)-\dim(\Delta)},
\end{equation}
where the sum is over all simplices $\Phi\in T$ containing $\Delta$.
When $T$ is clear from context, we omit the subscript. 
Note that when $T$ is a boundary triangulation of $P$, the definition of the $h$-vector will be adjusted according to dimension, that is, $d$ should be replaced by $d-1$ in (\ref{h-vector}).

For a $d$-simplex $\Delta$ with denominator $p$, let $\bW$ be the set of ray generators of $\cone{\Delta}$ at height $p$, which are all integral. 
We then define the $h^*$-\emph{polynomial of $\Delta$} as the generating function of the last coordinate of integer points in $\Pi(\bW):=\Pi(\Delta)$, that is, $$h^*(\Delta;z)=\sum_{\bv\in \Pi(\Delta)\cap \Z^{d+1}} z^{u(\bv)}.$$
With this consideration, the Ehrhart series of $\Delta$ can be expressed as $$\Ehr(\Delta;z)=\frac{h^*(\Delta;z)}{(1-z^p)^{d+1}}.$$

We use a modified convention when $\Delta$ is a rational $m$-simplex of a triangulation $T$, where $T$ has denominator $q$.
In this case, it is possible that the denominator of $\Delta$ as an individual simplex might be different from $q$, but for coherence among all simplices in $T$ we use $q$ to select the height of the ray generators in $\Delta$.
Namely, we let $\bW=\{(\br_1,q),\dots, (\br_{m+1},q)\}$, where the $(\br_i,q)$ are integral ray generators of $\cone{\Delta}$ at height $q$.
The corresponding $h^*$-polynomial of $\Delta$ is a function of $q$ and the Ehrhart series of $\Delta$ can be expressed as $$\Ehr(\Delta;z)=\frac{h^*(\Delta;z)}{(1-z^q)^{m+1}}.$$
We may think of $h^*(\Delta;z)$ as computed via $\sum_{\bv\in \Pi(\bW)\cap \Z^{d+1}}z^{u(\bv)}$.


\section{Rational Betke--McMullen Decomposition}\label{sec:betkemcmullen}

\subsection{Decomposition \`a la Betke--McMullen}
Let $P$ be a rational $d$-polytope and $T$ a triangulation of $P$ with denominator $q$.
For an $m$-simplex $\Delta \in T$, let $\bW=\{(\br_1,q),\dots, (\br_{m+1},q)\}$, where the $(\br_i,q)$ are the integral ray generators of $\cone{\Delta}$ at height $q$ as above.  
Further, set $B(\bW;z)=:B(\Delta;z)$ and similarly $\boxx{\bW}=:\boxx{\Delta}$.
We emphasize that the $h^*$-polynomial, fundamental parallelepiped, and  box polynomial of $\Delta$ depend on the denominator $q$ of $T$.

A point $\bv\in \cone{\Delta}$ can be uniquely expressed as $\bv=\sum_{i=1}^{m+1} \lambda_i (\br_i,q)$ for $\lambda_i \geq 0$. 
Define 
\begin{equation}\label{I-bar}
    I(\bv):=\{i\in [m+1]: \lambda_i\in \Z\}
    \qquad \text{ and } \qquad
    \overline{I(\bv)}:=[m+1]\setminus I,
\end{equation}
where $[m+1]:=\{1,\dots,m+1\}$.

\begin{lemma}\label{lemma:h-star}
Fix a triangulation $T$ with denominator $q$ of a rational $d$-polytope $P$ and let $\Delta \in T$. Then $h^*(\Delta;z)=\sum_{\Omega \subseteq \Delta} B(\Omega;z).$
\end{lemma}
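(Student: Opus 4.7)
The plan is to prove the identity by establishing a disjoint decomposition of the fundamental parallelepiped $\Pi(\bW)$ of $\Delta$ into translates of the open boxes $\boxx{\Omega}$ ranging over all faces $\Omega\subseteq\Delta$ (including the empty face, which contributes the origin). Once this partition is set up, the claim follows by summing $z^{u(\bv)}$ on both sides.

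More concretely, I would begin by recalling the unpacked definition $h^*(\Delta;z)=\sum_{\bv\in \Pi(\bW)\cap\Z^{d+1}} z^{u(\bv)}$ where $\bW=\{\bw_1,\dots,\bw_{m+1}\}$ is the set of height-$q$ ray generators of $\cone{\Delta}$. Since $\bW$ is linearly independent, every $\bv\in\Pi(\bW)$ has a unique expression $\bv=\sum_{i=1}^{m+1}\lambda_i\bw_i$ with $0\le\lambda_i<1$. Define $S(\bv):=\{i:\lambda_i>0\}\subseteq[m+1]$, and let $\bW_{S(\bv)}:=\{\bw_i:i\in S(\bv)\}$. The key observation is that the subset $\bW_{S(\bv)}$ is precisely the set of height-$q$ ray generators of $\cone{\Omega}$ for the unique face $\Omega\subseteq\Delta$ whose vertices correspond to $S(\bv)$ (and $\Omega=\emptyset$ when $S(\bv)=\emptyset$, in which case $\bv=\bo$).

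The map $\bv\mapsto\Omega(\bv)$ sorts the lattice points of $\Pi(\bW)$ according to the face they lie in the relative interior of: by construction $\bv$ lies in $\boxx{\bW_{S(\bv)}}=\boxx{\Omega(\bv)}$ because all coefficients are strictly between $0$ and $1$. Conversely, every lattice point of $\boxx{\Omega}$ for any face $\Omega\subseteq\Delta$ sits inside $\Pi(\bW)$ by setting the remaining coefficients equal to $0$, and these embeddings are pairwise disjoint because the $\lambda_i$'s are uniquely determined and the support $S(\bv)$ distinguishes which face we are in. This yields the disjoint union
\[
\Pi(\bW)\cap\Z^{d+1} \;=\; \bigsqcup_{\Omega\subseteq\Delta}\bigl(\boxx{\Omega}\cap\Z^{d+1}\bigr),
\]
with the convention $\boxx{\emptyset}=\{\bo\}$ contributing $B(\emptyset;z)=1$. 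Summing $z^{u(\bv)}$ over both sides gives $h^*(\Delta;z)=\sum_{\Omega\subseteq\Delta}B(\Omega;z)$.

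There is no real obstacle here; the only point requiring care is the convention that $T$ has a common denominator $q$ and that the ray generators of every face $\Omega\subseteq\Delta$ are taken at the same height $q$ rather than at the denominator of $\Omega$ itself. This is exactly why $\bW_{S(\bv)}\subseteq\bW$, so that the open boxes $\boxx{\Omega}$ assemble cleanly inside $\Pi(\bW)$ without any rescaling. Once this convention is flagged, the proof is a short bookkeeping argument of the type familiar from the lattice case.
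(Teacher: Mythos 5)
Your argument is correct and is essentially the paper's own proof: you partition $\Pi(\bW)$ into the open boxes $\boxx{\Omega}$ indexed by the support set $S(\bv)$, which is exactly the paper's $\overline{I(\bv)}$, and then sum $z^{u(\bv)}$ over the pieces. (The word ``translates'' in your opening sentence is a slip---no translation occurs, as your own argument shows---but this does not affect the proof.)
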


\begin{proof}

First we show that $\Pi(\Delta)=\biguplus_{\Omega\subseteq \Delta} \boxx{\Omega}$.
The reverse containment follows from the fact that any element in $\boxx{\Omega}$ is a linear combination of the ray generators of $\cone{\Omega}$. 

For the forward containment, if $\bv\in \Pi(\Delta)$, then 
$$\bv=\sum_{i=1}^{m+1}\lambda_i(\br_i,q)
=\sum_{i\in \overline{I(\bv)}}\lambda_i(\br_i,q)  \in \boxx{\Omega},$$ 
for $\Omega := \conv{\frac{\br_i}{q} :i\in \overline{I(\bv)}} \subseteq \Delta$. 
Note that $\bv$ will always lie in a unique $\boxx{\Omega}$ because every $\Omega$ corresponds to a different subset of $[m+1]$, which also tells us that the union we desire is disjoint. 

Thus $\Pi(\Delta)=\biguplus_{\Omega\subseteq \Delta} \boxx{\Omega}$, and so
\begin{equation*}
       h^*(\Delta;z)=\sum_{\bv\in \Pi(\Delta)\cap \Z^{d+1}}z^{u(\bv)}
    =\sum_{\Omega \subseteq \Delta}\sum_{\bv\in \boxx{\Omega}\cap \Z^{d+1}}z^{u(\bv)}
    =\sum_{\Omega \subseteq \Delta} B(\Omega;z) \, . \qedhere
\end{equation*}
\end{proof}

\begin{theorem}\label{decomp}
  For a triangulation $T$ with denominator $q$ of a rational $d$-polytope~$P$, 
  \[
    \Ehr(P;z)=\frac{\sum_{\Omega\in T}B(\Omega;z)\, h(\Omega;z^q)}{(1-z^q)^{d+1}} \, .
  \]
\end{theorem}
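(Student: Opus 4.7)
The plan is to expand both sides via the geometric decomposition of $P$ into relative interiors of simplices in $T$ and match them term by term. Using $P = \bigsqcup_{\Omega \in T} \mathrm{relint}(\Omega)$, the cone decomposes as $\cone{P} \setminus \{\bo\} = \bigsqcup_{\emptyset \neq \Omega \in T} \cone{\Omega}^{\circ}$, where $\cone{\Omega}^{\circ}$ denotes the relative interior. Summing the height generating functions and accounting for the apex gives
\[
  \Ehr(P;z) = 1 + \sum_{\emptyset \neq \Omega \in T} E^{\circ}(\Omega; z), \qquad \text{where} \qquad E^{\circ}(\Omega; z) := \sum_{\bv \in \cone{\Omega}^{\circ} \cap \Z^{d+1}} z^{u(\bv)} \, .
\]

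Next I would produce a closed form for each $E^{\circ}(\Omega; z)$ in terms of box polynomials of faces of $\Omega$. For an $m$-simplex $\Omega \in T$ with integer ray generators $\bw_1,\dots,\bw_{m+1}$ of $\cone{\Omega}$ at height $q$, every lattice point of $\cone{\Omega}^{\circ}$ decomposes uniquely as $\bv_0 + \sum_i k_i \bw_i$ with $k_i \in \Z_{\geq 0}$ and $\bv_0$ in the half-open parallelepiped $\{\sum \mu_i \bw_i : 0 < \mu_i \leq 1\}$. The involution $\mu_i \mapsto 1 - \mu_i$ puts this lattice set in bijection with $\Pi(\Omega) \cap \Z^{d+1}$ while sending heights via $u \mapsto q(m+1) - u$, yielding
\[
  E^{\circ}(\Omega; z) = \frac{z^{q(m+1)} h^*(\Omega; z^{-1})}{(1-z^q)^{m+1}} \, .
\]
Expanding $h^*(\Omega; z^{-1}) = \sum_{\Omega' \subseteq \Omega} B(\Omega'; z^{-1})$ using Lemma~\ref{lemma:h-star} and rewriting each $B(\Omega'; z^{-1}) = z^{-q(\dim \Omega' + 1)} B(\Omega'; z)$ via Lemma~\ref{lemma:box}, I obtain
\[
  E^{\circ}(\Omega; z) = \sum_{\Omega' \subseteq \Omega} \frac{z^{q(\dim \Omega - \dim \Omega')} B(\Omega'; z)}{(1-z^q)^{\dim \Omega + 1}} \, .
\]

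Substituting these back into the expression for $\Ehr(P;z)$, swapping the order of summation so that $\Omega'$ becomes the outer index (relabel it $\Omega$) and $\Phi$ (the former $\Omega$) becomes the inner one, and clearing a common denominator of $(1-z^q)^{d+1}$ reduces the theorem to the combinatorial identity
\[
  h(\Omega; z^q) = \sum_{\Omega \subseteq \Phi \in T} z^{q(\dim \Phi - \dim \Omega)} (1 - z^q)^{d - \dim \Phi} \, ,
\]
which is immediate from the definition \eqref{h-vector} of $h(\Omega; z^q)$ upon distributing the factor $(1 - z^q)^{d - \dim \Omega}$ into the sum. The extra ``$+1$'' apex contribution together with the $\Omega' = \emptyset$ sub-simplex terms on the right-hand side are exactly balanced by adopting the standard convention that $\emptyset \in T$ with $B(\emptyset; z) = 1$.

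The main obstacle is the bookkeeping in the middle step: carefully executing the involution between the $(0,1]$-parallelepiped, which naturally enumerates lattice points of $\cone{\Omega}^{\circ}$, and the $[0,1)$-parallelepiped $\Pi(\Omega)$ that defines $h^*$, and tracking how the height shift $u \mapsto q(m+1) - u$ interacts with the dimension-dependent powers of $z^q$ and $(1-z^q)$ through the reindexing. No single step is conceptually deep, but every exponent must align precisely for the inner sum to match the definition of $h(\Omega; z^q)$.
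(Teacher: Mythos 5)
Your proof is correct and follows the same overall route as the paper: decompose $P$ (equivalently, $\cone{P}$ minus its apex) into the relative interiors of the simplices of $T$, convert each open piece into half-open parallelepiped data, expand via Lemma~\ref{lemma:h-star}, rewrite $B(\Omega;1/z)$ via Lemma~\ref{lemma:box}, and swap the order of summation so that the inner sum is exactly the definition \eqref{h-vector} of $h(\Omega;z^q)$; your treatment of the empty face and the leading $1$ also agrees with the paper's convention that $\emptyset\in T$ with $B(\emptyset;z)=1$, which is how the paper absorbs the apex term. The one place you diverge is the middle step: where the paper simply cites Ehrhart--Macdonald reciprocity to write $\Ehr(\Delta^\circ;z)=(-1)^{\dim(\Delta)+1}\Ehr\left(\Delta;\tfrac{1}{z}\right)$, you derive the equivalent identity $E^\circ(\Omega;z)=z^{q(m+1)}h^*\left(\Omega;\tfrac{1}{z}\right)/(1-z^q)^{m+1}$ directly, by tiling the open cone with translates of the $(0,1]$-parallelepiped and applying the involution $\mu_i\mapsto 1-\mu_i$. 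That computation is precisely the standard proof of reciprocity for open simplicial cones, so the two arguments coincide in substance; yours is self-contained (no appeal to the reciprocity theorem) at the cost of the extra parallelepiped and height bookkeeping you flag, while the paper's is shorter given the citation.
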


\begin{proof}
We write $P$ as the disjoint union of all open nonempty simplices in $T$ and use Ehrhart--Macdonald reciprocity~\cite{Ehrhart, Macdonald}:
\begin{align*}
    \Ehr(P;z)&=1+\sum_{\Delta\in T\setminus\{\emptyset\}}\Ehr(\Delta^\circ;z)
     =1 + \sum_{\Delta\in T\setminus\{\emptyset\}}(-1)^{\dim(\Delta)+1}\Ehr\left(\Delta;\frac{1}{z}\right)\\
    &=1 + \sum_{\Delta\in T\setminus\{\emptyset\}}(-1)^{\dim(\Delta)+1}\frac{h^*\left(\Delta;\frac{1}{z}\right)}{\left(1-\frac{1}{z^q}\right)^{\dim(\Delta)+1}}
     = 1 + \sum_{\Delta\in
T\setminus\{\emptyset\}}\frac{(z^q)^{\dim(\Delta)+1}(1-z^q)^{d-\dim(\Delta)} \, h^*\left(\Delta;\frac{1}{z}\right)}{(1-z^q)^{d+1}}.
\end{align*}
Note that the Ehrhart series of each $\Delta$ is being written as a rational function with denominator $(1-z^q)^{d+1}$. 
Using Lemma~\ref{lemma:h-star}, 
\begin{align*}\Ehr(P;z)&=1 + \sum_{\Delta\in T\setminus\emptyset}\frac{(z^q)^{\dim(\Delta)+1}(1-z^q)^{d-\dim(\Delta)}\sum_{\Omega \subseteq \Delta} B\left(\Omega;\frac{1}{z}\right)}{(1-z^q)^{d+1}}\\
&= \frac{\sum_{\Delta\in T}\left[(z^q)^{\dim(\Delta)+1}(1-z^q)^{d-\dim(\Delta)}\sum_{\Omega \subseteq \Delta} B\left(\Omega;\frac{1}{z}\right)\right]}{(1-z^q)^{d+1}}.
\end{align*} 
By Lemma~\ref{lemma:box}, 
\begin{align*}
    h^*(P;z)&=\sum_{\Delta\in T}\left[ (z^q)^{\dim(\Delta)+1}(1-z^q)^{d-\dim(\Delta)}\sum_{\Omega \subseteq \Delta} B\left(\Omega;\frac{1}{z}\right)\right]\\
    &=\sum_{\Delta\in T}\left[ (z^q)^{\dim(\Delta)+1}(1-z^q)^{d-\dim(\Delta)}\sum_{\Omega \subseteq \Delta} (z^q)^{-\dim(\Omega)-1}B(\Omega;z)\right] \\
    &= \sum_{\Omega\in T}\sum_{\Omega \subseteq \Delta}(z^q)^{\dim(\Delta)-\dim(\Omega)}(1-z^q)^{d-\dim(\Delta)}B(\Omega;z)\\
    &= \sum_{\Omega\in T}\left[ B(\Omega;z)(1-z^q)^{d-\dim(\Omega)}\sum_{\Omega \subseteq \Delta}\left(\frac{z^q}{1-z^q}\right)^{\dim(\Delta)-\dim(\Omega)}\right].
\end{align*}
Using the definition of the $h$-polynomial, the theorem follows. 
\end{proof}

\subsection{Rational $h^*$-Monotonicity}

We now show how the following theorem follows from our rational Betke--McMullen formula.

\begin{theorem}[Stanley Monotonicity~\cite{StanleyMonotonicity}]\label{thm:StanleyMonotonicity}
Suppose that $P\subseteq Q$ are rational polytopes with $qP$ and $qQ$ integral (for minimal possible $q\in
\Z_{>0}$). Define the $h^*$-polynomials via
\[
  \Ehr(P; z) = \frac{ h^*(P;z) }{ \left( 1-z^q \right)^{ \dim(P) + 1 } } 
  \qquad \text{ and } \qquad
  \Ehr(Q; z) = \frac{ h^*(Q;z) }{ \left( 1-z^q \right)^{ \dim(Q) + 1 } } \, .
\]
Then $h^*_i(P;z)\leq h^*_i(Q;z)$ coefficient-wise. 
\end{theorem}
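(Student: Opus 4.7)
The plan is to deduce monotonicity from the rational Betke--McMullen decomposition of Theorem~\ref{decomp}, applied to a nested pair of triangulations. I will focus on the case $\dim(P) = \dim(Q) = d$, which is the essential case.

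First I would construct a triangulation $T_Q$ of $Q$ with denominator $q$ that contains, as a subcomplex, a triangulation $T_P$ of $P$ with denominator $q$. Since $qP \subseteq qQ$ are both full-dimensional lattice polytopes in $\R^d$, one may triangulate $qP$ using integer points in $qP \cap \Z^d$, then extend by a standard pulling/placing construction to a triangulation of $qQ$ that adds only integer points of $qQ$ and preserves the sub-triangulation on $qP$. Rescaling by $1/q$ yields the required pair $T_P \subseteq T_Q$.

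Next I would apply Theorem~\ref{decomp} to each polytope and subtract, partitioning the resulting sum over $T_Q$ according to membership in $T_P$:
$$h^*(Q;z) - h^*(P;z) = \sum_{\Omega \in T_Q \setminus T_P} B(\Omega;z)\, h_{T_Q}(\Omega;z^q) \; + \; \sum_{\Omega \in T_P} B(\Omega;z)\bigl[h_{T_Q}(\Omega;z^q) - h_{T_P}(\Omega;z^q)\bigr].$$
The first sum has nonnegative coefficients term-by-term: each $B(\Omega;z)$ is a sum of monomials by definition, and each $h_{T_Q}(\Omega;z)$ is the $h$-polynomial of the link $\mathrm{lk}_{T_Q}(\Omega)$, which is a triangulation of a ball or sphere and hence Cohen--Macaulay with nonnegative $h$-vector.

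The main obstacle is showing that each summand of the second sum has nonnegative coefficients, i.e.\ that $h_{T_Q}(\Omega;z) \geq h_{T_P}(\Omega;z)$ coefficient-wise for every $\Omega \in T_P$. When $\Omega$ lies in the interior of $P$ the two links coincide and the difference vanishes. When $\Omega$ lies on $\partial P$, the link $\mathrm{lk}_{T_P}(\Omega)$ embeds as a proper pure subcomplex of $\mathrm{lk}_{T_Q}(\Omega)$, either as a sub-ball of a ball or as a hemispherical ball inside a sphere. Monotonicity of link $h$-polynomials in this setting is a purely simplicial-complex statement; it can be obtained by choosing a shelling of $\mathrm{lk}_{T_Q}(\Omega)$ whose initial segment shells $\mathrm{lk}_{T_P}(\Omega)$, so that each further facet contributes a single nonnegative monomial to the $h$-polynomial. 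This is the same combinatorial step that drives the lattice case of Stanley monotonicity via Betke--McMullen, and it transfers to the rational setting without modification since it depends only on the simplicial structure of the pair $T_P \subseteq T_Q$, not on the denominator $q$.
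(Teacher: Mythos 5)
Your overall route is the same as the paper's: apply the rational Betke--McMullen decomposition (Theorem~\ref{decomp}) to a pair of triangulations $T_P\subseteq T_Q$, split the difference $h^*(Q;z)-h^*(P;z)$ into a sum over $T_Q\setminus T_P$ and a sum of differences over $T_P$, and reduce everything to coefficient-wise monotonicity of the link $h$-polynomials. The gap is in how you justify that monotonicity, $h_{T_P}(\Omega;z)\le h_{T_Q}(\Omega;z)$ for $\Omega\subseteq\partial P$. You assert it by ``choosing a shelling of $\operatorname{link}_{T_Q}(\Omega)$ whose initial segment shells $\operatorname{link}_{T_P}(\Omega)$,'' but neither half of that is available: the links of faces in an arbitrary triangulation are merely simplicial balls or spheres, and simplicial balls need not be shellable at all (Rudin's and Ziegler's balls); moreover, even when both complexes are shellable, a shelling of a subcomplex need not extend to a shelling of the ambient complex --- extendability of partial shellings fails in general, so the existence of a shelling of the larger link whose initial segment is exactly the smaller link is precisely the point that would need proof. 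This is also not ``the same combinatorial step'' as in the classical lattice argument: Stanley's monotonicity proof, and the paper's Lemma~\ref{h-vector monotonicity}, avoid shellings entirely and use Cohen--Macaulayness instead. Both links are balls or spheres, hence Cohen--Macaulay; the relative complex $\mathcal{R}=\operatorname{link}_{T_Q}(\Omega)-\operatorname{link}_{T_P}(\Omega)$ is Cohen--Macaulay by \cite[Corollary 7.3]{StanleyGreenBook}; and \cite[Proposition 7.1]{StanleyGreenBook} then gives $h_{T_Q}(\Omega;z)-h_{T_P}(\Omega;z)=h_{\mathcal{R}}(\emptyset;z)\ge 0$ coefficient-wise. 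Replacing your shelling claim by this relative Cohen--Macaulay argument repairs the proof; the rest of your decomposition bookkeeping, the nonnegativity of the box polynomials, and the nonnegativity of $h_{T_Q}(\Omega;z)$ for faces outside $P$ are fine and match the paper.

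A secondary omission: you set aside the case $\dim(P)<\dim(Q)$ as inessential, but it does not reduce formally to the equal-dimension case, because the theorem normalizes the two Ehrhart series with different powers $(1-z^q)^{\dim(P)+1}$ and $(1-z^q)^{\dim(Q)+1}$, so the two $h^*$-polynomials are not computed over triangulations of the same dimension. The paper handles this in the second half of Lemma~\ref{h-vector monotonicity} by joining $T_P$ with affinely independent vertices of $Q$ outside the affine span of $P$, raising the dimension one step at a time (each step again controlled by a relative Cohen--Macaulay argument), and then invoking the equal-dimension case. You would need to supply this, or some equivalent reduction, for the statement as given.
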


In addition to Stanley's original proof, Beck and Sottile~\cite{BeckSottile} provide a proof of Theorem~\ref{thm:StanleyMonotonicity} using irrational decompositions of rational polyhedra.
In the case of lattice polytopes, Jochemko and Sanyal~\cite{JochemkoSanyal} prove Theorem~\ref{thm:StanleyMonotonicity}  using combinatorial positivity of translation-invariant valuations and  Stapledon~\cite{StapledonGeometric} gives a geometric interpretation of Theorem~\ref{thm:StanleyMonotonicity} by considering the $h^*$-polynomials of lattice polytopes in terms of orbifold Chow rings.
The following lemma assumes familiarity with Cohen--Macaulay complexes and related theory; see~\cite{StanleyGreenBook} for definitions and further reading.

\begin{lemma}\label{h-vector monotonicity}
Suppose $P$ is a polytope and $T$ a triangulation of $P$.
Let $P\subseteq Q$ be a polytope and $T'$ a triangulation of $Q$ such that $T'$ restricted to $P$ is $T$.
Further, if $\dim(P)<\dim(Q)$, assume that there exists a set of affinely independent vertices $\bv_1,\ldots,\bv_n$ of $Q$ outside the affine span of $P$ such that (1) the join $T\ast \conv{\bv_1,\ldots,\bv_n}$ is a subcomplex of $T'$ and (2) $\dim(P\ast \conv{\bv_1,\ldots,\bv_n})=\dim(Q)$.
For every face $\Omega\in T$, the coefficient-wise inequality $h_T(\Omega;z)\leq
h_{T'}(\Omega, z)$ holds. 
\end{lemma}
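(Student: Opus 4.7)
The plan is to (i) identify $h_T(\Omega;z)$ with the classical $h$-polynomial of the link $\mathrm{lk}_T(\Omega)$, (ii) use the join hypothesis to reduce to the case $\dim(P)=\dim(Q)$, and (iii) invoke Stanley's monotonicity theorem for Cohen--Macaulay simplicial complexes. First I would distribute the $(1-z)^{d-\dim(\Omega)}$ factor inside the defining sum to rewrite
\[
h_T(\Omega;z)=\sum_{\Omega\subseteq\Phi\in T}z^{\dim(\Phi)-\dim(\Omega)}(1-z)^{d-\dim(\Phi)},
\]
which is precisely the ordinary $h$-polynomial of the $(d-\dim(\Omega)-1)$-dimensional simplicial complex $\mathrm{lk}_T(\Omega)$; the analogous identity holds for $T'$.

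Next, assume $\dim(P)<\dim(Q)$ and set $S:=T\ast\conv{\bv_1,\dots,\bv_n}$. By hypothesis $S$ is a subcomplex of $T'$, and $\dim(S)=\dim(P)+n=\dim(Q)$. Every face $\Phi\in S$ containing $\Omega$ is uniquely of the form $\Psi\ast\sigma$ with $\Omega\subseteq\Psi\in T$ and $\sigma\subseteq\{\bv_1,\dots,\bv_n\}$, with $\dim(\Phi)=\dim(\Psi)+|\sigma|$. Factoring the defining sum of $h_S(\Omega;z)$ as a product and applying the binomial identity $(1+z/(1-z))^n=(1-z)^{-n}$ gives
\[
h_S(\Omega;z)=(1-z)^{\dim(Q)-\dim(\Omega)-n}\sum_{\Omega\subseteq\Psi\in T}\left(\frac{z}{1-z}\right)^{\dim(\Psi)-\dim(\Omega)}=h_T(\Omega;z),
\]
using $\dim(Q)-n=\dim(P)=d$. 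Thus it suffices to prove the inequality with $T$ replaced by $S$, i.e.\ I may assume from the outset that $T\subseteq T'$ and $\dim(T)=\dim(T')$.

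In that equal-dimension setting, $\mathrm{lk}_T(\Omega)$ is a subcomplex of $\mathrm{lk}_{T'}(\Omega)$ of the same dimension $\dim(Q)-\dim(\Omega)-1$. Triangulations of polytopes are Cohen--Macaulay, and links of faces in CM complexes remain CM, so both links are Cohen--Macaulay of the same dimension. Stanley's monotonicity theorem for CM complexes (see~\cite{StanleyGreenBook}) then yields $h_i(\mathrm{lk}_T(\Omega))\le h_i(\mathrm{lk}_{T'}(\Omega))$ for every $i$, which is exactly the desired coefficient-wise inequality.

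The main obstacle I anticipate is the dimension bookkeeping in the join reduction: one must check that the factor $(1-z)^{-n}$ introduced by summing over the $2^n$ choices of $\sigma$ is cancelled precisely by the $(1-z)^n$ increase in the prefactor coming from $\dim(Q)=\dim(P)+n$. After this cancellation, the conclusion reduces to a clean application of Stanley's CM monotonicity, so the technical substance of the argument lies entirely in the join computation and in verifying the CM hypothesis for links of faces in polytopal triangulations.
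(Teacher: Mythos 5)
Your proof is correct, but it takes a genuinely different route from the paper's. You first identify $h_T(\Omega;z)$ with the ordinary $h$-polynomial of $\operatorname{link}_T(\Omega)$, then observe that for $S:=T\ast\conv{\bv_1,\ldots,\bv_n}$ one has $h_S(\Omega;z)=h_T(\Omega;z)$ for every $\Omega\in T$, because the sum over $\sigma\subseteq\{\bv_1,\ldots,\bv_n\}$ contributes $\left(1+\frac{z}{1-z}\right)^n=(1-z)^{-n}$, exactly cancelling the shift $\dim(Q)=\dim(P)+n$ in the prefactor; this is the familiar fact that coning (joining with a simplex) preserves $h$-vectors, and it collapses in one computation what the paper does by an induction on the complexes $T_k=T\ast\conv{\bv_1,\ldots,\bv_k}$, adding one vertex at a time. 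In the resulting equal-dimension situation the paper argues via relative simplicial complexes: $\mathcal{R}=\operatorname{link}_{T'}(\Omega)-\operatorname{link}_T(\Omega)$ is Cohen--Macaulay by the ball/sphere results in \cite{StanleyGreenBook}, so the difference $h_{T'}(\Omega;z)-h_T(\Omega;z)=h_{\mathcal{R}}(\emptyset;z)$ is itself a nonnegative $h$-polynomial; you instead invoke Stanley's monotonicity theorem for Cohen--Macaulay complexes of equal dimension applied to the pair $\operatorname{link}_S(\Omega)\subseteq\operatorname{link}_{T'}(\Omega)$, which are balls or spheres and hence Cohen--Macaulay, and whose dimensions agree by purity. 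Two remarks: the theorem you invoke lives in Stanley's monotonicity paper \cite{StanleyMonotonicity} (or can be proved directly by choosing a generic l.s.o.p.\ for $k[\operatorname{link}_{T'}(\Omega)]$ whose image is an l.s.o.p.\ for the Stanley--Reisner ring of the subcomplex, so that the $h$-numbers are Hilbert function values of a ring and its quotient) rather than being an explicitly stated theorem of \cite{StanleyGreenBook}, so the citation should be adjusted or the l.s.o.p.\ argument supplied; and since this section's purpose is a new proof of Stanley's Ehrhart monotonicity, quoting the companion combinatorial theorem from that same paper is logically sound (no circularity --- its proof does not use the $h^*$ statement) but somewhat against the spirit, whereas the paper's relative-complex argument additionally exhibits the difference of $h$-polynomials as the $h$-vector of an explicit Cohen--Macaulay relative complex.
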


\begin{proof}
Suppose first that $\dim(P)=\dim(Q)$. 
Let $T$ be a triangulation of $P$ and $T'$ a triangulation of $Q$ such that $T'$ restricted to $P$ is $T$. 
Note that $T$ and $T'$ are geometric simplicial complexes covering $P$ and $Q$, respectively. 
Let $\Omega\in T$. 
Then $\operatorname{link}_T(\Omega)$ and $\operatorname{link}_{T'}(\Omega)$ are either balls or spheres, hence Cohen--Macaulay.
Now, consider $\mathcal{R}:=\operatorname{link}_{T'}(\Omega)-\operatorname{link}_T(\Omega)$, which is a relative simplicial complex.  
By~\cite[Corollary 7.3(iv)]{StanleyGreenBook} $\mathcal{R}$ is also Cohen--Macaulay.  
From~\cite[Proposition 7.1]{StanleyGreenBook} it follows that
\[
  h_{\mathcal{R}}(\emptyset;z)=h_{T'}(\Omega;z)-h_T(\Omega;z)
  \qquad \text{ and } \qquad
  h_{\mathcal{R}}(\emptyset;z), \ h_T(\Omega;z), \ h_{T'}(\Omega;z)\geq 0 \, . 
\]
Rearranging, we obtain that $h_{T'}(\Omega;z)=h_{\mathcal{R}}(\emptyset;z)+h_T(\Omega;z)$, which implies that $h_{T}(\Omega;z)\leq h_{T'}(\Omega;z)$
Hence, for each face in $T$, the result follows. 

Now, consider the case when $\dim(P)<\dim(Q)$.
Again, let $T$ be a triangulation of $P$ and $T'$ a triangulation of $Q$ such that $T'$ restricted to $P$ is $T$, where we further assume that there exists a set of affinely independent vertices $\bv_1,\ldots,\bv_n$ of $Q$ outside the affine span of $P$ such that (1) the join $T\ast \conv{\bv_1,\ldots,\bv_n}$ is a subcomplex of $T'$ and (2) $\dim(P\ast \conv{\bv_1,\ldots,\bv_n})=\dim(Q)$.
\color{black}
Note that the affine independence of the $\bv_i$'s implies that
\[
  \dim(\conv{P\cup\bv_1\cup\cdots \cup \bv_k})=\dim(\conv{P\cup\bv_1\cup\cdots \cup \bv_{k-1}})+1 \, .
\]
Let $T_k$ denote the join of $T$ with the simplex $\conv{\bv_1,\ldots,\bv_k}$.
Let $\Omega\in T_k$.
Since $\Omega \subseteq \partial T_{k+1}$ and $\operatorname{link}_{T_k}(\Omega)$ and $\operatorname{link}_{T_{k+1}}(\Omega)$ are both balls, $\mathcal{R}:=\operatorname{link}_{T_{k+1}}(\Omega)-\operatorname{link}_{T_k}(\Omega)$ is Cohen--Macaulay by~\cite[ Proposition 7.3(iii)]{StanleyGreenBook}.
Thus, by a similar argument as given in the paragraph above,
\[
h_{T_k}(\Omega;z)\leq h_{T_{k+1}}(\Omega;z) \, .
  \]
  Combining this with the fact that $\dim(P\ast \conv{\bv_1,\ldots,\bv_n})=\dim(Q)$, it follows by induction (for the first inequality) and our previous case (for the second inequality) that for $\Omega\in T$
  \[
h_T(\Omega;z)\leq h_{T_n}(\Omega;z)\leq h_{T'}(\Omega;z)\, . \qedhere
    \]
\end{proof}

\begin{proof}[Proof of Theorem~\ref{thm:StanleyMonotonicity}]
Let $P$ be a polytope contained in $Q$. 
Let $T$ be a triangulation of $P$ and let $T'$ be a triangulation of $Q$ such that $T'$ restricted to $P$ is $T$, where if $\dim(P)<\dim(Q)$ the triangulation $T'$ satisfies the conditions given in Lemma~\ref{h-vector monotonicity}.
(Note that such a triangulation $T'$  can always be obtained from $T$, e.g., by extending $T$ using a placing triangulation.)
By Theorem~\ref{decomp}, $h^*(P;z)=\sum_{\Omega\in T}B(\Omega;z) \, h_T(\Omega;z^q)$. 
Since $P$ is contained in $Q$,
$$h^*(Q;z)=\sum_{\Omega\in T}B(\Omega;z) \, h_{T'|_P}(\Omega;z^q)+\sum_{\Omega\in T'\setminus
T}B(\Omega;z) \, h_{T'}(\Omega;z^q).$$
By Lemma~\ref{h-vector monotonicity}, the coefficients of $\sum_{\Omega\in T}B(\Omega;z)h_{T'}(\Omega;z^q)$ dominate the coefficients of $\sum_{\Omega\in T}B(\Omega;z)h_T(\Omega;z^q).$
This further implies that the coefficients of $h^*(Q;z)$ dominate the coefficients of $h^*(P;z)$ since 
\begin{align*}\sum_{\Omega\in T}B(\Omega;z) \, h_T(\Omega;z^q)&\leq \sum_{\Omega\in
T}B(\Omega;z) \, h_{T'}(\Omega;z^q) \\
&\leq \sum_{\Omega\in T}B(\Omega;z) \, h_{T'|_P}(\Omega;z^q)+\sum_{\Omega\in T'\setminus T}B(\Omega;z)
\, h_{T'}(\Omega;z^q) \, . \qedhere
\end{align*}
\end{proof}


\section{$h^*$-Decompositions from Boundary Triangulations}\label{sec:stapledon}\label{sec:boundarytriangulations}

\subsection{Set-up}\label{setup}

Throughout this section we will use the following set-up.
Fix a boundary triangulation $T$ with denominator $q$ of a rational $d$-polytope $P$.
 Take $\ell \in \Z_{>0}$, such that $\ell P$ contains a lattice point $\ba$ in its interior.  
Thus $(\ba,\ell)\in \cone{P}^\circ \cap \Z^{d+1}$ is a lattice point in the interior of the cone of $P$ at height $\ell$, and $\cone{(\ba,\ell)}$ is the ray through the point $(\ba,\ell)$.
We cone over each $\Delta \in T$ and define $\bW=\{(\br_1,q),\dots, (\br_{m+1},q)\}$ where the $(\br_i,q)$ are integral ray generators of $\cone{\Delta}$ at height $q$.
As before, we have the associated box polynomial $B(\bW;z)=:B(\Delta;z)$.
Now, let $\bW'=\bW \cup \{(\ba,\ell)\}$ be the set of generators from $\bW$ together with $(\ba,\ell)$ and we set $\cone{\Delta'}$ to be the cone generated by $\bW'$, with associated box polynomial $B(\bW';z)=:B(\Delta';z)$. 

\begin{corollary}\label{symmetry box polynomials}
For each face $\Delta$ of $T$,
\[
  B(\Delta;z)=z^{q(\dim(\Delta) +1)} B\left(\Delta;\tfrac{1}{z}\right)
  \qquad \text{ and } \qquad
  B(\Delta';z)=z^{q(\dim(\Delta)+1)+\ell} B\left(\Delta';\tfrac{1}{z}\right).
\]
\end{corollary}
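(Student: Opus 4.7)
The plan is to observe that both identities are immediate specializations of Lemma~\ref{lemma:box}, which for any set of generators $\bW$ says $B(\bW;z)=z^{\sum_i u(\bw_i)}B(\bW;1/z)$. The only step that needs verification is the computation of the exponent $\sum_i u(\bw_i)$ in each case, where $u$ is projection onto the last coordinate.

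First I would handle the $\Delta$ case. By the set-up in Section~\ref{setup}, we have $\bW=\{(\br_1,q),\ldots,(\br_{m+1},q)\}$ with $m=\dim(\Delta)$, so each generator has last coordinate equal to $q$. Hence $\sum_{i=1}^{m+1} u(\br_i,q)=q(m+1)=q(\dim(\Delta)+1)$, and Lemma~\ref{lemma:box} yields the first identity.

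Next I would handle the $\Delta'$ case. Here $\bW'=\bW\cup\{(\ba,\ell)\}$, and the extra ray generator contributes $u(\ba,\ell)=\ell$ to the sum. So $\sum_{\bw\in\bW'} u(\bw)=q(\dim(\Delta)+1)+\ell$, and Lemma~\ref{lemma:box} (applied to $\bW'$) produces the second identity.

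There is no substantive obstacle here; the corollary is simply a restatement of Lemma~\ref{lemma:box} adapted to the two ambient cones $\cone{\Delta}$ and $\cone{\Delta'}$ used throughout Section~\ref{sec:stapledon}, and it is recorded as a corollary because the two specific symmetry exponents $q(\dim(\Delta)+1)$ and $q(\dim(\Delta)+1)+\ell$ will be invoked repeatedly in what follows.
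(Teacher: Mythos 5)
Your argument is exactly the paper's: the corollary is obtained by applying Lemma~\ref{lemma:box} to $\bW$ and to $\bW'$, with the exponent computed as the sum of the last coordinates of the generators, namely $q(\dim(\Delta)+1)$ and $q(\dim(\Delta)+1)+\ell$ respectively. The proposal is correct and matches the paper's proof.
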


\begin{proof}
The height of $\sum_i(\br_i,q)$ is $q$ times the number of summands, which gives us $q(\dim(\Delta)+1)$.
 The first equations now follow from the involution $\iota$ and Lemma~\ref{lemma:box}; note
that we will have to use $\bW$ in the first case and $\bW'$ in the second.
\end{proof}

Observe that when $\Delta=\emptyset$ is the empty face, $B(\emptyset;z)=1$, but $B(\emptyset';z)=B((\ba,\ell);z)$.
This differs from the scenario in~\cite{StapledonInequalities} where Stapledon's set-up determined that $B(\emptyset',z)=0$. 

For a real number $x$, define $\lfloor x \rfloor$ to be the greatest integer less than or equal to $x$. 
Additionally, define the \emph{fractional part} of $x$ to be $\{x\} = x - \lfloor x \rfloor$.

\subsection{Boundary Triangulations}

For each $\bv\in \cone{P}$ we associate two faces $\Delta(\bv)$ and $\Omega(\bv)$ of $T$, as follows.
The face $\Delta(\bv)$ is chosen to be the minimal face of $T$ such that $\bv \in
\cone{\Delta'(\bv)}$, and we define
$$\Omega(\bv):=\conv{\frac{\br_i}{q} :i\in \overline{I(\bv)}}\subseteq \Delta(\bv),$$
where $\overline{I(\bv)}$ is defined as in (\ref{I-bar}) and the $(\br_i,q)$ are ray generators of $\cone{\Delta(\bv)}$.
In an effort to make our statements and proofs less notation heavy, for the rest of this section we write  $\Delta(\bv)=\Delta$ and $\Omega(\bv)=\Omega$ with the understanding that both depend on $\bv$. 
Furthermore, for $\bv=\sum_{i=1}^{m+1}\lambda_i(\br_i,q)+\lambda(\ba,\ell)$ where $\lambda,
\lambda_i\geq 0$, define $$\{\bv\} := \sum_{i\in \overline{I(\bv)}} \{\lambda_i\} (\br_i,q) + \{\lambda\}(\ba,\ell).$$

\begin{lemma}\label{lemma:unique sum}
Given $\bv\in \cone{P}$, construct $\Delta = \Delta(\bv)$ as described above, with
$\cone{\Delta}$ generated by $(\br_1,q), \dots, (\br_{m+1},q)$.
Then $\bv$ can be written uniquely as 
\begin{equation}\label{v in parts}
    \{\bv\} +\sum_{i\in I(\bv)} (\br_i,q)
+\sum_{i=1}^{m+1}\mu_i(\br_i,q) +\mu(\ba,\ell),
\end{equation}
 where $\mu,\mu_i \in \Z_{\geq 0}$.
\end{lemma}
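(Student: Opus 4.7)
The plan is to exploit linear independence of the generators $\bW'$ of $\cone{\Delta'}$ to obtain a unique expansion of $\bv$ as a nonnegative combination of $(\br_1,q),\dots,(\br_{m+1},q),(\ba,\ell)$, and then split each coefficient into its fractional part (absorbed into $\{\bv\}$) and its nonnegative integer part (contributing $\mu_i$ or $\mu$). For indices in $I(\bv)$ an extra unit is peeled off to match the explicit ``$\sum_{i\in I(\bv)}(\br_i,q)$'' appearing in~\eqref{v in parts}.

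First I would verify that $\bW'=\{(\br_1,q),\dots,(\br_{m+1},q),(\ba,\ell)\}$ is linearly independent. The $(\br_i,q)$ are independent because $\Delta$ is a simplex, and since $\Delta$ is a boundary face of $P$, $\cone{\Delta}$ lies in a proper face of $\cone{P}$, hence in a supporting hyperplane of $\cone{P}$. As $(\ba,\ell)\in\cone{P}^\circ$, it is not on this hyperplane, so it cannot be a linear combination of the $(\br_i,q)$. Thus $\bv\in\cone{\Delta'}$ admits a unique expansion $\bv=\sum_{i=1}^{m+1}\lambda_i(\br_i,q)+\lambda(\ba,\ell)$ with $\lambda_i,\lambda\geq 0$.

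Next I would invoke the minimality of $\Delta=\Delta(\bv)$: if some $\lambda_i=0$, then $\bv$ would already be expressible using only $\{(\br_j,q):j\neq i\}\cup\{(\ba,\ell)\}$, placing $\bv$ in the analogous extended cone of the facet $\Theta\subsetneq\Delta$ opposite the $i$-th vertex and contradicting minimality. Therefore $\lambda_i>0$ for every $i$, and in particular $\lambda_i\geq 1$ whenever $i\in I(\bv)$. Setting $\mu_i:=\lambda_i-1$ for $i\in I(\bv)$, $\mu_i:=\lfloor\lambda_i\rfloor$ for $i\in\overline{I(\bv)}$, and $\mu:=\lfloor\lambda\rfloor$ (all nonnegative integers), substituting into the unique expansion and recognizing the definition of $\{\bv\}$ yields~\eqref{v in parts} and establishes existence.

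For uniqueness, observe that $\{\bv\}$ and $I(\bv)$ are determined intrinsically by $\bv$ through the unique $\lambda_i,\lambda$, so only the integers $\mu_i,\mu$ are in question. Any expression of the form~\eqref{v in parts} expands to a nonnegative-real combination of $\bW'$; comparing with the unique such combination of $\bv$ forces $\{\lambda_i\}+\1[i\in I(\bv)]+\mu_i=\lambda_i$ and $\{\lambda\}+\mu=\lambda$, which pins down $\mu_i$ and $\mu$. The main obstacle is the minimality-to-positivity step: without it, $\lambda_i=0$ for some $i\in I(\bv)$ would require $\mu_i=-1$ and the construction would break.
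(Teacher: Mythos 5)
Your proof is correct and follows essentially the same route as the paper: expand $\bv$ over the generators of $\cone{\Delta'}$, use minimality of $\Delta(\bv)$ to force every $\lambda_i>0$ (hence $\lambda_i\geq 1$ when $i\in I(\bv)$), and split each coefficient into integer and fractional parts to produce the $\mu_i$ and $\mu$. The only difference is that you spell out the linear independence of $\bW'$ and the resulting uniqueness of the $\mu_i,\mu$, which the paper leaves implicit; this is a welcome, but not essentially different, elaboration.
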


Below we will note the dependence of the unique coefficients $\mu_i$ and $\mu$ on $\bv$ by writing them as
$\mu_i(\bv)$ and~$\mu(\bv)$.

\begin{proof}
Since $\bv$ is in $\cone{\Delta'}$, it can be written as a linear combination of the generators of $\cone{\Delta}$ and $(\ba,\ell)$. 
We further express $\bv$ as a sum of its integer and fractional parts. \begin{align*}
    \bv &= \sum_{i=1}^{m+1}\lambda_i(\br_i,q) +\lambda (\ba,\ell), \text{ for } \lambda_i>0 \text{ and } \lambda \geq 0\\
    &= \sum_{i\in \overline{I(\bv)}}\{\lambda_i \} (\br_i,q)+ \{\lambda \}(\ba,\ell) + \sum_{i=1}^{m+1}\lfloor \lambda_i \rfloor (\br_i,q) + \lfloor \lambda \rfloor (\ba,\ell)\\
    &= \{\bv\} + \sum_{i=1}^{m+1}\lfloor \lambda_i \rfloor (\br_i,q) + \lfloor \lambda \rfloor (\ba,\ell). 
\end{align*}

 
Note that each $\lambda_i>0$ because of the minimality of $\Delta$.
Recall that $\Omega = \conv{\frac{\br_i}{q} :i\in \overline{I(\bv)}} \subseteq \Delta$. Thus
\begin{itemize}
    \item if $\lambda \notin \Z$, then $\{\bv\}\in \boxx{\Omega'}$,  
    \item if $\lambda \in \Z$, then $\{\bv\}\in \boxx{\Omega}$.
\end{itemize}
Further observe that when $\lambda$ is an integer, $\{\bv\}$ is an element on the boundary of $\cone{P}$. 

If $i\in I(\bv)$, then $\lambda_i\in \Z$ and $\lfloor \lambda_i \rfloor = \lambda_i \geq 1$ for $i\in I(\bv)$. 
This allows us to represent $\bv$ in the form 
$$\bv = \{\bv\} +\sum_{i\in I(\bv)} (\br_i,q)
+\sum_{i=1}^{m+1}\mu_i(\br_i,q) +\mu(\ba,\ell),$$
where $\mu,\mu_i \in \Z_{\geq 0}$.
\end{proof}

\begin{corollary}\label{cor: heights}
Continuing the notation above,
\begin{equation}
    u(\bv)=u(\{\bv\})+q(\dim{\Delta(\bv)}-\dim{\Omega(\bv)})+\sum_{i=1}^{m+1}q \,
\mu_i(\bv)+\mu(\bv) \, \ell \, .
\end{equation}
\end{corollary}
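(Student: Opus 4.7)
The corollary is essentially a direct bookkeeping consequence of Lemma~\ref{lemma:unique sum}, so the plan is very short.

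My plan is to take the unique decomposition
\[
\bv = \{\bv\} + \sum_{i \in I(\bv)} (\br_i, q) + \sum_{i=1}^{m+1} \mu_i(\bv)(\br_i,q) + \mu(\bv)(\ba,\ell)
\]
supplied by Lemma~\ref{lemma:unique sum} and apply the linear functional $u$ (projection onto the last coordinate) to both sides. The three ``lattice'' sums contribute $q\lvert I(\bv)\rvert$, $q\sum_{i=1}^{m+1}\mu_i(\bv)$, and $\mu(\bv)\ell$ respectively, which already accounts for all terms on the right-hand side of the claimed formula except for the $q(\dim\Delta(\bv) - \dim\Omega(\bv))$ term.

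The one observation to record is that $\lvert I(\bv)\rvert = \dim\Delta(\bv) - \dim\Omega(\bv)$. This is immediate from dimension counting: since $\Delta(\bv)$ has ray generators $(\br_1,q),\dots,(\br_{m+1},q)$, we have $\dim\Delta(\bv) = m$, while $\Omega(\bv) = \conv{\frac{\br_i}{q}: i \in \overline{I(\bv)}}$ is a simplex of dimension $\lvert\overline{I(\bv)}\rvert - 1$. Therefore
\[
\dim\Delta(\bv) - \dim\Omega(\bv) = m - (\lvert\overline{I(\bv)}\rvert - 1) = (m+1) - \lvert\overline{I(\bv)}\rvert = \lvert I(\bv)\rvert,
\]
using $I(\bv) \sqcup \overline{I(\bv)} = [m+1]$. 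Substituting this identity into the expression obtained by applying $u$ yields the corollary.

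There is no real obstacle here — the only mildly non-trivial point is the dimensional identification $\lvert I(\bv)\rvert = \dim\Delta(\bv) - \dim\Omega(\bv)$, which hinges on the convention that the ray generators are indexed by $[m+1]$ and that $\Omega(\bv)$ is spanned precisely by those generators whose coefficients in the expression for $\bv$ are non-integral.
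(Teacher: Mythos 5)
Your proposal is correct and follows essentially the same route as the paper, whose proof simply says the identity follows from summing the height contributions of each part of the decomposition in Lemma~\ref{lemma:unique sum}; you merely make explicit the bookkeeping step $\lvert I(\bv)\rvert = \dim\Delta(\bv) - \dim\Omega(\bv)$ that the paper leaves implicit.
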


\begin{proof}
This follows from considering the height contribution of each part in (\ref{v in parts}). 
\end{proof}

The following theorem provides a decomposition of the $h^*$-polynomial of a rational
polytope in terms of box and $h$-polynomials. 
It is important to note again that the $h^*$-polynomial depends on the denominator of the boundary triangulation.

\begin{theorem}\label{h^*-polynomial}
Consider a rational $d$-polytope $P$ that contains an interior point $\frac{\ba}{\ell}$, where $\ba \in \Z^d$ and $\ell\in \Z_{>0}$. 
Fix a boundary triangulation $T$ of $P$ with denominator $q$.
Then  
$$h^*(P;z)=\frac{1-z^q}{1-z^\ell} \sum_{\Omega\in T}\left(B(\Omega;z)+B(\Omega';z)\right) h(\Omega;z^q).$$ 
\end{theorem}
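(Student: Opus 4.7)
The plan is to compute $\Ehr(P;z)$ as a lattice-point generating function on $\cone{P}$ and then extract $h^*(P;z)=(1-z^q)^{d+1}\Ehr(P;z)$.  Since $(\ba,\ell)$ lies in the interior of $\cone{P}$, the cones $\cone{\Delta'}$ for $\Delta\in T$ form a stellar subdivision of $\cone{P}$ from the ray through $(\ba,\ell)$, and the minimal-face convention of Section~\ref{setup} partitions $\cone{P}\cap\Z^{d+1}$ by pairs $(\Delta(\bv),\Omega(\bv))$ with $\Omega\subseteq\Delta$.

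For a fixed pair $(\Delta,\Omega)$, I would apply Lemma~\ref{lemma:unique sum} and Corollary~\ref{cor: heights}: each contributing $\bv$ decomposes uniquely as $\bv=\{\bv\}+\sum_{i\in I(\bv)}(\br_i,q)+\sum_i\mu_i(\br_i,q)+\mu(\ba,\ell)$, and $u(\bv)$ splits additively.  Whether the coefficient $\lambda$ of $(\ba,\ell)$ is an integer determines whether $\{\bv\}\in\boxx{\Omega}$ or $\{\bv\}\in\boxx{\Omega'}$, and in either branch $\mu=\lfloor\lambda\rfloor$ and the $\mu_i$ range independently over $\Z_{\geq 0}$.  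Thus the inner sum factors as
\[
  \bigl(B(\Omega;z)+B(\Omega';z)\bigr)\cdot\frac{z^{q(\dim\Delta-\dim\Omega)}}{(1-z^q)^{\dim\Delta+1}}\cdot\frac{1}{1-z^\ell}.
\]

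The next step is to swap the order of summation so that $\Omega$ is outer and $\Delta\supseteq\Omega$ is inner.  The resulting inner sum $\sum_{\Omega\subseteq\Delta\in T}z^{q(\dim\Delta-\dim\Omega)}/(1-z^q)^{\dim\Delta+1}$ collapses to $h(\Omega;z^q)/(1-z^q)^d$ by the boundary-triangulation version of~(\ref{h-vector}) (with $d-1$ in place of $d$).  Assembling the pieces yields
\[
  \Ehr(P;z)=\frac{1}{(1-z^q)^d(1-z^\ell)}\sum_{\Omega\in T}\bigl(B(\Omega;z)+B(\Omega';z)\bigr)h(\Omega;z^q),
\]
and multiplying by $(1-z^q)^{d+1}$ gives the theorem.

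The main obstacle is verifying the stellar-subdivision bookkeeping: that the minimal-face assignment genuinely partitions $\cone{P}\cap\Z^{d+1}$ without double counting, and that the two-branch analysis for $\lambda\in\Z$ versus $\lambda\notin\Z$ exhausts every lattice point on each cell.  The degenerate case $\Delta=\Omega=\emptyset$ warrants a separate check, since $\cone{\Delta'}$ then collapses to the ray through $(\ba,\ell)$ and one needs $(1+B(\emptyset';z))/(1-z^\ell)$ to enumerate its lattice points correctly --- a short geometric-series identity that becomes nontrivial precisely when $\gcd(\ba,\ell)>1$.  Once these verifications are in place the remainder is direct computation.
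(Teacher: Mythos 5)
Your proposal is correct and follows essentially the same route as the paper: grade $\cone{P}\cap\Z^{d+1}$ by height, apply the unique decomposition of Lemma~\ref{lemma:unique sum} and the height formula of Corollary~\ref{cor: heights} to factor each $(\Delta,\Omega)$-block as $\left(B(\Omega;z)+B(\Omega';z)\right)z^{q(\dim\Delta-\dim\Omega)}/\bigl((1-z^q)^{\dim\Delta+1}(1-z^\ell)\bigr)$, then swap the order of summation to produce $h(\Omega;z^q)/(1-z^q)^d$. The bookkeeping issues you flag (no double counting, the $\lambda\in\Z$ versus $\lambda\notin\Z$ dichotomy, and the $\Delta=\emptyset$ ray with $B(\emptyset';z)=B((\ba,\ell);z)$ handling the imprimitive case) are exactly what the paper's Section~\ref{setup} conventions and Lemma~\ref{lemma:unique sum} are set up to resolve.
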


\begin{proof}
By 
Corollary~\ref{cor: heights},
\begin{align*}
\frac{h^*(P;z)}{(1-z^q)^{d+1}}
    &=\sum_{\bv\in \cone{P} \cap \Z^{d+1}}z^{u(\bv)}\\
    &=\sum_{\bv \in \cone{P} \cap \Z^{d+1}} z^{u(\{\bv\})+q(\dim{\Delta(\bv)}-\dim{\Omega(\bv)})+\sum_{i=1}^{\dim(\Delta)+1}q\mu_i(\bv)+\mu(\bv) \ell}\\
    &=\sum_{\Delta\in T}\sum_{\Omega\subseteq \Delta} z^{ q(\dim{\Delta}-\dim{\Omega}) } \sum_{\bv\in
\left(\boxx{\Omega}\cup\boxx{\Omega'}\right)\cap \Z^{d+1}}z^{u(\bv)} \sum_{\mu_i,\mu \geq 0} z^{\sum_{i=1}^{\dim(\Delta)+1}q\mu_i+\mu \ell}\\
    &=\sum_{\Delta\in T}\sum_{\Omega\subseteq \Delta}\frac{\left(B(\Omega;z)+B(\Omega';z)\right)z^{q(\dim{\Delta}-\dim{\Omega})}}{(1-z^q)^{\dim(\Delta)+1}(1-z^{\ell})}\\
        &=\frac{1}{1-z^\ell}\sum_{\Omega \in T}\left(B(\Omega;z)+B(\Omega';z)\right)\sum_{\Omega\subseteq \Delta}\frac{(z^q)^{\dim(\Delta)-\dim(\Omega)}}{(1-z^q)^{\dim(\Delta)+1}}\\
          &=\frac{1}{(1-z^\ell)(1-z^q)^d}\sum_{\Omega \in
T}\left(B(\Omega;z)+B(\Omega';z)\right) h(\Omega;z^q) \, . \qedhere
\end{align*}
\end{proof}

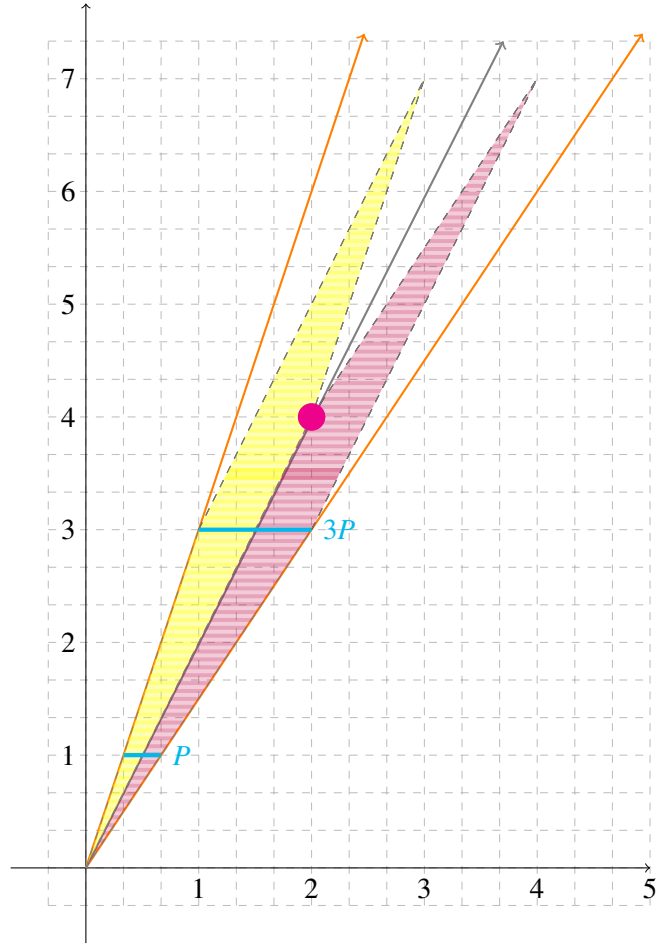
\begin{figure}
\begin{center}
\begin{tikzpicture}
 \definecolor{polytopecolor}{rgb}{0.0, 0.72, 0.92}
\draw[color=lightgray,step=.5cm,
dashed] (-0.5,-.5) grid (7.5,11);
\draw[->] (-1,0) -- (7.5,0)
node[below right] {};
\draw[->] (0,-1) -- (0,11.5)
node[left] {};

\draw[thick, orange,->] (0,0) -- (3.7, 11.1) 
node[left] {};

\draw[thick, orange,->] (0,0)  -- (7.4,11.1)
node[left] {};

\draw[thick, gray,->] (0,0) -- (5.55, 11)
node[left] {};

\draw (3/2,0)
node[below] {1};
\draw (3,0)
node[below] {2};
\draw (4.5,0)
node[below] {3};
\draw (6,0)
node[below] {4};
\draw (7.5,0)
node[below] {5};
\draw (0,3/2)
node[left] {1};
\draw (0,3)
node[left] {2};
\draw (0,4.5)
node[left] {3};
\draw (0,6)
node[left] {4};
\draw (0,7.5)
node[left] {5};
\draw (0,9)
node[left] {6};
\draw (0,10.5)
node[left] {7};

\draw (3,6)
node[fill, color=magenta,circle, minimum size] {};

\draw[dashed] (3/2,9/2) -- (4.5,10.5)--(3,6)
node[left] {};

\draw[dashed] (3,9/2) -- (6,10.5)--(3,6)
node[left] {};

\draw[dashed][gray,top color=yellow, bottom color=yellow, fill opacity=0.3] (3/2,9/2) -- (4.5,10.5)--(3,6) -- (0,0) -- cycle;

\draw[dashed][gray,top color=purple, bottom color=purple, fill opacity=0.2] (3,9/2) -- (6,10.5)--(3,6) -- (0,0) -- cycle;
node[left] {};


\draw[ultra thick, polytopecolor] (3/2,9/2) -- (3,9/2)
node[right] {$3P$};
\draw[ultra thick, polytopecolor] (1/2,3/2) -- (1,3/2) 
node[right] {$P$};

\draw (3,6)
node[fill, color=magenta,circle] {};

\end{tikzpicture}
\end{center}
\caption{ This figure shows $\cone{P}$ (in orange), $P$, $3P$, $(\ba,\ell)=(2,4)$, $\boxx{\Delta_1'}$ (in yellow), $\boxx{\Delta_2'}$ (in pink).}
\label{fig:cone} 
\end{figure}

\begin{example}
  Following the setup in Section~\ref{setup}, consider the line segment
$P=\left[\frac{1}{3},\frac{2}{3}\right]$ and so our boundary triangulation $T$ has denominator $3$.
  In the cone over $P$, set $(\ba,\ell)=(2,4)$. 
  The simplices in $T$ are the empty face $\emptyset$ and the two vertices $\Delta_1=\frac{1}{3}$ and $\Delta_2=\frac{2}{3}$.
  The cones over the vertices have integral ray generators $\bW_1=\{\left(1,3\right)\}$ and $\bW_2=\{\left(2,3\right)\}$. 
We see that if $\bv\in \cone{P}$ then the only  options for $\Delta(\bv)$ to be chosen as a minimal face of $T$ such that $\bv\in \cone{\Delta'(\bv)}$ are again to consider $\emptyset$, $\Delta_1$, and $\Delta_2$.
In this example, $\Omega(\bv)=\Delta(\bv)$. 
Recall that since $T$ is a boundary triangulation of $P$, the definition of the $h$-vector (\ref{h-vector}) is adjusted according to dimension, that is, $d$ is be replaced by $d-1$.

From Figure~\ref{fig:cone} we determine the following: 

\begin{center}
 \begin{tabular}{||c c c c c||} 
 \hline
 $\Omega\in T$ & $\dim(\Omega)$ & $B(\Omega;z)$ & $B(\Omega';z)$ & $h(\Omega,z^3)$\\ [0.5ex] 
 \hline\hline
 $\Delta_1$ & 0 & 0 & 0 & 1 \\ 
 \hline
 $\Delta_2$ & 0 & 0 & 0 & 1 \\
 \hline
 $\emptyset$ & -1 & 1 & $z^2$ & $1+z^3$ \\
 [1ex] 
 \hline
  \end{tabular}
\end{center}

Applying Theorem~\ref{h^*-polynomial}, we obtain 
\begin{align*}
    h^*(P;z)&=\frac{1-z^3}{1-z^4}\left(1+z^3+z^2+z^5\right)\\
    &=1+z^2+z^4,
\end{align*}
which agrees with the computation obtained using Normaliz~\cite{Normaliz}. 
\end{example}

\subsection{Rational Stapledon Decomposition and Inequalities}
Using Theorem~\ref{h^*-polynomial}, we can rewrite the $h^*$-polynomial of a rational polytope $P$ 
as
$$h^*(P;z)= \frac{1+z+\cdots+z^{q-1}}{1+z+\cdots+z^{\ell-1}} \sum_{\Omega\in
T}\left(B(\Omega;z)+B(\Omega';z)\right) h(\Omega;z^q)\, .
$$ 
Next, we turn our attention to the polynomial 
\begin{equation}\label{h-bar}
    \overline{h^*}(P;z):=\left(1+z+\cdots+z^{\ell-1}\right)h^*(P;z) \, .
\end{equation}
We know that $h^*(P;z)$ is a polynomial of degree at most $q(d+1)-1$, thus
$\overline{h^*}(P;z)$ has degree at most $q(d+1)+\ell-2$. 
We set $f$ to be the degree of $\overline{h^*}(P;z)$ and $s$ to be the degree of $h^*(P;z)$.
We can recover $h^*(P;z)$ from $\overline{h^*}(P;z)$ for a chosen value of $\ell$; 
if we write 
\begin{equation*}
    \overline{h^*}(P;z)=\overline{h^*_0}+\overline{h^*_1}z+\cdots + \overline{h^*_f}z^f,
\end{equation*}
then 
\begin{equation}\label{sum of h_i}
    \overline{h^*_i}=h^*_i+h^*_{i-1}+ \cdots +h^*_{i-l+1} \, \qquad i=0,\dots, f,
\end{equation}
and we set
$h^*_i=0$ when $i>s$ or $i<0$.

\begin{prop}\label{smallest dilate}
Let $P$ be a rational $d$-polytope with denominator $q$ and Ehrhart series 
$$
\Ehr(P;z)=\frac{h^*(P;z)}{(1-z^q)^{d+1}}\, .
$$
Then $\deg{h^*(P;z)}=s$ if and only if $(q(d+1)-s)P$ is the smallest integer dilate of $P$ that contains an interior lattice point.
\end{prop}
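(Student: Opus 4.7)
The plan is to derive this equivalence from Ehrhart--Macdonald reciprocity applied to the rational polytope $P$. The point is that $\deg h^*(P;z)$ controls the lowest-degree term in the Ehrhart series of $P^\circ$, which in turn is determined by the first integer dilate of $P$ that acquires an interior lattice point.

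Concretely, I would first compute
\[
\Ehr(P;1/z) = \frac{h^*(P;1/z)}{(1-z^{-q})^{d+1}} = \frac{(-1)^{d+1} z^{q(d+1)}\, h^*(P;1/z)}{(1-z^q)^{d+1}}\, ,
\]
and then apply reciprocity in the form $\Ehr(P^\circ;z) = (-1)^{d+1}\Ehr(P;1/z)$ to obtain
\[
\Ehr(P^\circ;z) \;=\; \frac{z^{q(d+1)}\, h^*(P;1/z)}{(1-z^q)^{d+1}}\, .
\]
Writing $h^*(P;z) = h^*_0 + h^*_1 z + \cdots + h^*_s z^s$ with $h^*_s \neq 0$, the numerator becomes $h^*_0 z^{q(d+1)} + \cdots + h^*_s z^{q(d+1)-s}$, whose minimal exponent is $q(d+1)-s$.

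Next, I would extract the smallest $z$-exponent in $\Ehr(P^\circ;z) = \sum_{t\geq 1} L_{P^\circ}(t)\, z^t$. Because $(1-z^q)^{-(d+1)} = 1 + (d+1)z^q + \cdots$ has constant term $1$, multiplication by it does not shift the lowest exponent, so the smallest power of $z$ appearing in the series equals $q(d+1)-s$. On the other hand, by definition $\sum_{t\geq 1} L_{P^\circ}(t)\, z^t$ has smallest exponent equal to the minimal $m \in \Z_{>0}$ with $L_{P^\circ}(m) > 0$, which is precisely the smallest integer dilate $mP$ containing an interior lattice point. Comparing these two expressions gives $m = q(d+1)-s$, and since each side of the claimed equivalence determines a unique integer, the biconditional follows.

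The main subtlety, rather than obstacle, is verifying that there is no cancellation between the numerator $z^{q(d+1)} h^*(P;1/z)$ and the denominator $(1-z^q)^{d+1}$ that would shift the lowest exponent; this is handled by the observation above that $(1-z^q)^{-(d+1)}$ is a power series with constant term $1$, so the lowest exponent of a product of such a series with any polynomial equals the lowest exponent of that polynomial. Once this is noted, the remainder of the argument is a direct extraction of coefficients on both sides.
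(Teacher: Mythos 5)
Your proof is correct and follows essentially the same route as the paper: apply Ehrhart--Macdonald reciprocity, rewrite $\Ehr(P^\circ;z)$ as $z^{q(d+1)}h^*(P;1/z)/(1-z^q)^{d+1}$, and read off that the lowest exponent $q(d+1)-s$ equals the smallest dilate with an interior lattice point. Your remark that the denominator's inverse is a power series with constant term $1$ (so the lowest exponent $q(d+1)-s$ survives, with coefficient $h^*_s\neq 0$) is exactly the implicit step in the paper's argument, so nothing is missing.
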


\begin{proof}
Let $L(P;t)$ and $L(P^\circ;t)$ be the Ehrhart quasipolynomials of $P$ and the interior of $P$, respectively. 
Using Ehrhart--Macdonald reciprocity~\cite{Ehrhart, Macdonald} we obtain
\begin{align*}
   \Ehr(P^\circ;z)
    &= \sum_{t\geq 1}L(P^\circ; t)z^t
     = (-1)^{d+1}\frac{\sum^s_{j=0}h^*_j\left(\frac{1}{z}\right)^j}{\left(1-\frac{1}{z^q}\right)^{d+1}}
     =z^{q(d+1)}\frac{\sum^s_{j=0}h^*_jz^{-j}}{(1-z^q)^{d+1}}\\
    &=\left(\sum^s_{j=0}h^*_jz^{q(d+1)-j}\right)(1+z^q+z^{2q}+\cdots)^{d+1}.
\end{align*}
Now, note that the minimum degree term of $$\left(\sum^s_{j=0}h^*_jz^{q(d+1)-j}\right)(1+z^q+z^{2q}+\cdots)^{d+1}$$ is $h^*_sz^{q(d+1)-s}$, which implies that the term of $\sum_{t\geq 1}L(P^\circ; t)z^t$ with minimum degree is $q(d+1)-s$. 
Hence, the degree of $h^*(P;z)$ is $s$ precisely if $(q(d+1)-s)P$ is the smallest integer dilate of $P$ that contains an interior lattice point. 
\end{proof}

The following result provides a decomposition of the $\overline{h^*}$-polynomial which we refer to as an
\emph{$a/b$-decomposition}. It generalizes \cite[Theorem~2.14]{StapledonInequalities} to the rational
case.

\begin{theorem}\label{h-star decomp}
Let $P$ be a rational $d$-polytope with denominator $q$, and let $s:=\deg{h^*(P;z)}$. 
Then $\overline{h^*}(P;z)$ has a unique decomposition
$$\overline{h^*}(P;z)=a(z)+z^\ell b(z) \, ,$$
where $\ell=q(d+1)-s$ and $a(z)$ and $b(z)$ are polynomials with integer coefficients satisfying $a(z)=z^{q(d+1)-1}a\left(\frac{1}{z}\right)$ and $b(z)=z^{q(d+1)-1-\ell}b\left(\frac{1}{z}\right)$. 
Moreover, the coefficients of $a(z)$ and $b(z)$ are nonnegative.
\end{theorem}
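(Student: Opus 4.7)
The plan is to construct $a(z)$ and $b(z)$ explicitly by splitting the sum in Theorem~\ref{h^*-polynomial} according to whether the summand uses $B(\Omega;z)$ or $B(\Omega';z)$. Starting from the rewriting
\[
\overline{h^*}(P;z) = (1+z+\cdots+z^{q-1})\sum_{\Omega\in T}\bigl(B(\Omega;z)+B(\Omega';z)\bigr) h(\Omega;z^q)
\]
obtained from Theorem~\ref{h^*-polynomial} and the definition~(\ref{h-bar}) of $\overline{h^*}$, I would define
\begin{align*}
a(z) &:= (1+z+\cdots+z^{q-1})\sum_{\Omega\in T} B(\Omega;z)\, h(\Omega;z^q), \\
c(z) &:= (1+z+\cdots+z^{q-1})\sum_{\Omega\in T} B(\Omega';z)\, h(\Omega;z^q),
\end{align*}
so that $\overline{h^*}(P;z) = a(z) + c(z)$.

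The crucial step is establishing $z^\ell \mid c(z)$, which lets me set $b(z):=c(z)/z^\ell$. For this, observe that every lattice point of $\boxx{\Omega'}$ has the form $\sum_i \lambda_i(\br_i,q)+\lambda(\ba,\ell)$ with $\lambda > 0$ strictly. Since $(\ba,\ell) \in \mathrm{int}\cone{P}$ and for any convex cone $C$ one has $\mathrm{int}(C)+C\subseteq \mathrm{int}(C)$, such a point lies in the interior of $\cone{P}$. But a lattice point at height $m$ in $\mathrm{int}\cone{P}$ is an interior lattice point of $mP$; by Proposition~\ref{smallest dilate}, since $\ell=q(d+1)-s$, the smallest integer dilate of $P$ containing an interior lattice point is $\ell P$. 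Hence every lattice point in $\boxx{\Omega'}$ has height $\geq \ell$, so $z^\ell \mid B(\Omega';z)$ for every $\Omega$, and therefore $z^\ell \mid c(z)$. Nonnegativity of $a(z)$ and $b(z)$ is then immediate from their definitions as products and sums of nonnegative polynomials.

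For the palindromic symmetries, I combine Corollary~\ref{symmetry box polynomials} with the palindromicity of $h(\Omega;z^q)$. The latter holds because $T$ triangulates the $(d-1)$-sphere $\partial P$, so the link of each face $\Omega$ in $T$ is a simplicial sphere; by Dehn--Sommerville, $h(\Omega;z) = z^{d-1-\dim(\Omega)}\,h(\Omega;1/z)$, whence $h(\Omega;z^q) = z^{q(d-1-\dim(\Omega))}\,h(\Omega;1/z^q)$. Combining with the symmetries of Corollary~\ref{symmetry box polynomials} gives $B(\Omega;z)\,h(\Omega;z^q) = z^{qd}\,B(\Omega;1/z)\,h(\Omega;1/z^q)$ and $B(\Omega';z)\,h(\Omega;z^q) = z^{qd+\ell}\,B(\Omega';1/z)\,h(\Omega;1/z^q)$. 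Multiplying by the palindromic factor $1+z+\cdots+z^{q-1}$ of degree $q-1$, one obtains $a(z) = z^{q(d+1)-1}\,a(1/z)$ and $c(z) = z^{q(d+1)+\ell-1}\,c(1/z)$. Substituting $c(z)=z^\ell b(z)$ into the second yields $b(z) = z^{q(d+1)-1-\ell}\,b(1/z)$, as required.

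For uniqueness, if $\overline{h^*}(P;z) = \tilde a(z) + z^\ell \tilde b(z)$ is any decomposition with the stated symmetries, then applying the palindromic relations of $\tilde a$ and $\tilde b$ yields the identity
\[
(z^\ell - 1)\,\tilde a(z) \;=\; z^{q(d+1)+\ell-1}\,\overline{h^*}(P;1/z)\,-\,\overline{h^*}(P;z),
\]
which uniquely determines $\tilde a$ (and hence $\tilde b$) as polynomials. The main obstacle is the geometric argument for $z^\ell \mid c(z)$: one must carefully combine the minimality of $\ell$ from Proposition~\ref{smallest dilate}, the strict positivity of the $(\ba,\ell)$-coefficient in $\boxx{\Omega'}$, and the absorption property of interior cone points. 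Once that is in hand, the rest is bookkeeping of palindromic degrees via Corollary~\ref{symmetry box polynomials} and Dehn--Sommerville.
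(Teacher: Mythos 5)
Your proof is correct and follows essentially the same route as the paper's: existence and nonnegativity come from splitting the boundary-triangulation formula of Theorem~\ref{h^*-polynomial} into the $B(\Omega;z)$ and $B(\Omega';z)$ parts, polynomiality of $b(z)$ from the minimality of $\ell$ via Proposition~\ref{smallest dilate}, and the palindromic symmetries from Corollary~\ref{symmetry box polynomials} combined with Dehn--Sommerville for $h(\Omega;z^q)$. The only deviation is uniqueness: the paper defines the coefficients of $a(z)$ and $b(z)$ by explicit partial sums of the $h^*_i$ and verifies the required identities directly (formulas it then reuses in Theorem~\ref{inequalities}), whereas your identity $(z^\ell-1)\,\tilde a(z)=z^{q(d+1)+\ell-1}\,\overline{h^*}(P;1/z)-\overline{h^*}(P;z)$ is an equally valid and somewhat slicker way to pin down the decomposition.
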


\begin{proof}
Let $a_i$ and $b_i$ denote the coefficients of $z^i$ in $a(z)$ and $b(z)$, respectively.  
Set 
\begin{equation}\label{a}
    a_{i+1}= h^*_0+\cdots+h^*_{i+1}-h^*_{q(d+1)-1}-\cdots-h^*_{q(d+1)-1-i},
\end{equation}
and 
\begin{equation}\label{b}
    b_i=-h^*_0-\cdots-h^*_i+h^*_s+\cdots+h^*_{s-i}.
\end{equation}
Using (\ref{sum of h_i}) and the fact that $\ell=q(d+1)-s$, we compute that 

\begin{align*}
 a_i+b_{i-\ell} &= h^*_0+\cdots+h^*_{i}-h^*_{q(d+1)-1}-\cdots-h^*_{q(d+1)-i} -h^*_0-\cdots-h^*_{i-\ell}+h^*_s+\cdots+h^*_{s-i+\ell}\\
   &= h^*_{i-\ell+1}+\cdots+h^*_i= \overline{h^*_i} \, ,\\
  a_i-a_{q(d+1)-1-i} &= h^*_0+\cdots+h^*_{i}-h^*_{q(d+1)-1}-\cdots-h^*_{q(d+1)-i}
-h^*_0-\cdots-h^*_{q(d+1)-1-i}\\
& \qquad {}+h^*_{q(d+1)-1}+\cdots+h^*_{i+1} \\
   &=0 \, ,\\
b_i-b_{q(d+1)-1-\ell-i} &= -h^*_0-\cdots-h^*_i+h^*_s+\cdots+h^*_{s-i}
 +h^*_0+\cdots+h^*_i-h^*_s-\cdots-h^*_{s-i-1}-h^*_s-\cdots-h^*_{i+1}\\
&=0 \, , 
\end{align*} for $i=0,\dots,q(d+1)-1$. 
Thus, we obtain the decomposition desired. 
The uniqueness property follows from (\ref{a}) and~(\ref{b}).

Let $T$ be a regular boundary triangulation of $P$. 
By Theorem~\ref{h^*-polynomial} and (\ref{h-bar}), we can set 
\begin{equation}
    a(z)=(1+z+\cdots+z^{q-1}) \sum_{\Omega\in T} B(\Omega;z)  \, h(\Omega;z^q) \, ,
\end{equation}
and
\begin{equation}
    b(z)=z^{-\ell}(1+z+\cdots+z^{q-1}) \sum_{\Omega\in T}B(\Omega';z)  \, h(\Omega;z^q) \, ,
\end{equation}
so that $\overline{h^*}(P;z)=a(z)+z^{\ell}b(z)$. 
By Proposition~\ref{smallest dilate}, the dilate $kP$ contains no interior lattice points for $k=1,\dots, \ell-1$, so if $\bv \in \boxx{\Omega'}\cap \Z^{d+1}$ for $\Omega \in T$, then $u(\bv)\geq \ell$. 
Hence, $b(z)$ is a polynomial. 
We now need to verify that $$a(z)=z^{q(d+1)-1}a\left(\tfrac{1}{z}\right) \qquad \text{and} \qquad
b(z)=z^{q(d+1)-1-\ell}b\left(\tfrac{1}{z}\right) . $$ 
It is a well-known property of the $h$-vector in ($\ref{h-vector}$) that $h(\Omega, z^q)=z^{q(d-1-\dim(\Omega))}h(\Omega;z^{-q})$~\cite{Fulton, McMullenShephard, StanleyFaces}. 

Using the aforementioned and Corollary~\ref{symmetry box polynomials}, we determine that 

\begin{align*}
z^{q(d+1)-1}a\left(\frac{1}{z}\right)
&= z^{q(d+1)-1} \left(1+\frac{1}{z}+\cdots+\frac{1}{z^{q-1}}\right) \sum_{\Omega\in T} B\left(\Omega;\frac{1}{z}\right) h\left(\Omega;\frac{1}{z^q}\right)\\
 &=z^{q(d+1)-1}z^{1-q}(1+z+\cdots+z^{q-1}) \sum_{\Omega\in T} B\left(\Omega;\frac{1}{z}\right) h\left(\Omega;\frac{1}{z^q}\right)\\
 &=z^{qd}(1+z+\cdots+z^{q-1}) \sum_{\Omega\in T}z^{-q(\dim(\Omega) +1)}B(\Omega,z) \, z^{-q(d-1-\dim{\Omega})}h(\Omega;z^q)\\
 &=(1+z+\cdots+z^{q-1}) \sum_{\Omega\in T}B(\Omega,z) \, h(\Omega;z^q)=a(z)
\end{align*}
and 
\begin{align*}
z^{q(d+1)-1-\ell}b\left(\frac{1}{z}\right)
   & =z^{q(d+1)-1-\ell} z^\ell \left(1+\frac{1}{z}+\cdots+\frac{1}{z^{q-1}}\right) \sum_{\Omega\in T}B\left(\Omega';\frac{1}{z}\right) h\left(\Omega;\frac{1}{z^q}\right) \\
    &= z^{q(d+1)-1}z^{1-q}(1+z+\cdots+z^{q-1})\sum_{\Omega\in T}B\left(\Omega';\frac{1}{z}\right) h\left(\Omega;\frac{1}{z^q}\right)\\
    &=z^{qd}(1+z+\cdots+z^{q-1})\sum_{\Omega\in T}  z^{-q(\dim{\Omega}+1)-\ell}B(\Omega';z) \, z^{-q(d-1-\dim{\Omega})}h(\Omega;z^q)\\
    &=z^{-\ell}(1+z+\cdots+z^{q-1})\sum_{\Omega\in T}B(\Omega';z) \, h(\Omega;z^q)=b(z) \, .
\end{align*}
Lastly, recall that the box polynomials and the $h$-polynomials have nonnegative coefficients~\cite{StanleySubdivisions}, so a sum of products of box polynomials and $h$-polynomials will also have nonnegative coefficients.
Thus, the result holds.
\end{proof}

The next theorem follows as a corollary to Theorem~\ref{h-star decomp} and gives inequalities satisfied by the coefficients of the $h^*$-polynomial for rational polytopes.

\begin{theorem}\label{inequalities}
Let $P$ be a rational $d$-polytope with denominator $q$ and let $s:=\deg{h^*(P;z)}$.
The $h^*$-vector $(h^*_0,\dots,h^*_{q(d+1)-1})$ of $P$ satisfies the following inequalities: 
\begin{align}
      h^*_0+\cdots+h^*_{i+1}\geq h^*_{q(d+1)-1}+\cdots+h^*_{q(d+1)-1-i} \, , \qquad &i=0,\dots,
\left\lfloor \frac{q(d+1)-1}{2} \right\rfloor -1 \, , \label{ineq1a} \\
    h^*_s+\cdots+h^*_{s-i}\geq h^*_0+\cdots+h^*_i \, , \qquad &i=0,\dots, q(d+1)-1 \, . \label{ineq2a}
\end{align}
\end{theorem}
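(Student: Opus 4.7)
The proof will be almost immediate from Theorem~\ref{h-star decomp} and the explicit coefficient formulas (\ref{a}) and (\ref{b}) established in its proof. The plan is to read off the two inequalities as statements of nonnegativity for the coefficients of the polynomials $a(z)$ and $b(z)$ in the $a/b$-decomposition $\overline{h^*}(P;z)=a(z)+z^{\ell}b(z)$.

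First I would recall from the proof of Theorem~\ref{h-star decomp} that the coefficients of $a(z)$ and $b(z)$ are given by
\[
a_{i+1}=h^*_0+\cdots+h^*_{i+1}-h^*_{q(d+1)-1}-\cdots-h^*_{q(d+1)-1-i}
\]
and
\[
b_i=-h^*_0-\cdots-h^*_i+h^*_s+\cdots+h^*_{s-i}.
\]
Theorem~\ref{h-star decomp} also asserts that both $a(z)$ and $b(z)$ have nonnegative integer coefficients. Rearranging $a_{i+1}\geq 0$ yields inequality (\ref{ineq1a}), and rearranging $b_i\geq 0$ yields inequality (\ref{ineq2a}). Thus both parts of the theorem follow by substitution.

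The only small point to justify is the range of $i$ in (\ref{ineq1a}). Since $a(z)$ satisfies the palindromic symmetry $a(z)=z^{q(d+1)-1}a(1/z)$, we have $a_j=a_{q(d+1)-1-j}$, so nonnegativity in the range $j=1,\dots,\lfloor(q(d+1)-1)/2\rfloor$ already determines the sign of every coefficient. Translating $j=i+1$, the inequality for $a_{i+1}\geq 0$ for $i=0,\dots,\lfloor(q(d+1)-1)/2\rfloor-1$ is the entire content, matching the stated range exactly. For (\ref{ineq2a}) the symmetry of $b(z)$ likewise makes half the range redundant, but the statement as given (letting $i$ run up to $q(d+1)-1$) is still valid since $b_i=0$ for $i$ beyond its degree, at which point both sides of (\ref{ineq2a}) agree. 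There is no real obstacle here — all the hard work has been absorbed into Theorem~\ref{h-star decomp}, whose proof establishes both the explicit coefficient identities and the geometric nonnegativity via box polynomials and $h$-polynomials of boundary triangulations.
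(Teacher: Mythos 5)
Your proof is correct and follows essentially the same route as the paper: the paper's argument is exactly to read inequalities (\ref{ineq1a}) and (\ref{ineq2a}) off as nonnegativity of the coefficients $a_{i+1}$ and $b_i$ from formulas (\ref{a}) and (\ref{b}), with nonnegativity supplied by Theorem~\ref{h-star decomp}. Your additional remarks about the index ranges are accurate but not needed beyond what the paper states.
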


\begin{proof}
By (\ref{a}) and (\ref{b}) if follows that (\ref{ineq1a}) and (\ref{ineq2a}) hold if and only
if $a(z)$ and $b(z)$ have nonnegative coefficients, respectively, which in turn follows from Theorem~\ref{h-star decomp}. 
\end{proof}


\section{Applications}\label{sec:applications}

\subsection{Rational Reflexive Polytopes}
A lattice polytope is \emph{reflexive} if its dual is also a lattice polytope.
Reflexive polytopes have enjoyed a wealth of recent research activity (see,
e.g.,~\cite{Batyrev, BraunReflexive, BraunDavisSolus, HeSeongYau,
HegedusHigashitaniKasprzyk, HibiTsuchiyaReflexiveGraphs, HibiTsuchiyaDepth,
HibiTsuchiyaReflexive, NagaokaTsuchiya}), and 
Hibi~\cite{HibiDual} proved that a lattice polytope $P$ is the translate of a reflexive polytope if and only if $\Ehr\left(P;\frac{1}{z}\right)=(-1)^{d+1}z \, \Ehr(P;z)$ as rational
functions, that is, $h^*(z)$ is palindromic.
More generally,  Fiset and Kaspryzk~\cite[Corollary 2.2]{FisetKasprzyk} proved that a rational polytope $P$ whose dual is a lattice polytope has a palindromic $h^*$-polynomial, complementing previous results by De Negri and Hibi \cite{DeNegriHibi}.
The following proposition provides an alternate route to Fiset and Kaspryzk's result. 

\begin{theorem}\label{prop: dual}
  Let $P$ be a rational polytope containing the origin.
  The dual of $P$ is a lattice polytope if and only if $\overline{h^*}(P;z)=h^*(z)=a(z)$, that is, $b(z)=0$ in the $a/b$-decomposition of $\overline{h^*}(P;z)$ from Theorem~\ref{h^*-polynomial}.
\end{theorem}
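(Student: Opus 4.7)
The plan is to exploit the explicit formula for $b(z)$ appearing inside the proof of Theorem~\ref{h-star decomp}. Since $\mathbf{0} \in P^\circ$, I would take $\ba = \mathbf{0}$ and $\ell = 1$ in the set-up, so that $\overline{h^*}(P;z) = h^*(P;z)$ and the $a/b$-decomposition reads $h^*(P;z) = a(z) + zb(z)$. Every summand in the formula for $b(z)$ has nonnegative coefficients, so $b(z) = 0$ is equivalent to $B(\Omega';z) = 0$ for every $\Omega \in T$. The claim therefore reduces to proving that this simultaneous vanishing characterizes $P^*$ being a lattice polytope.

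The main tool for both directions is a primitive integer linear functional attached to each facet $F$ of $P$. I would write the supporting hyperplane of $F$ as $\{\langle \alpha_F, x\rangle = c_F\}$ with $\alpha_F \in \Z^d$ primitive and $c_F = p/q_F > 0$ in lowest terms; then the dual vertex at $F$ is $(q_F/p)\alpha_F$, which is integral if and only if $p = 1$. Define $\tilde{\phi}_F(x, t) := q_F \langle \alpha_F, x\rangle - pt$; since $\gcd(p, q_F) = 1$ and $\alpha_F$ is primitive, $\tilde{\phi}_F$ is a primitive integer functional on $\Z^{d+1}$. For any $\Omega \subseteq F$ in $T$ with lifted ray generators $(\br_i, q)$, one checks $\tilde{\phi}_F(\br_i, q) = 0$ and $\tilde{\phi}_F(\mathbf{0}, 1) = -p$, so for any $\bv = \sum \lambda_i (\br_i, q) + \lambda(\mathbf{0}, 1)$ one has $\tilde{\phi}_F(\bv) = -p\lambda$.

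For the easy direction (dual lattice implies $b(z) = 0$), I would use that $p = 1$ at every facet; then $\tilde{\phi}_F(\bv) \in \Z$ on a lattice point forces $\lambda \in \Z$, incompatible with $0 < \lambda < 1$. Hence $\boxx{\Omega'} \cap \Z^{d+1} = \varnothing$ and $B(\Omega';z) = 0$ for every $\Omega \in T$, with the empty face handled directly since $\boxx{\{(\mathbf{0},1)\}}$ contains no lattice points.

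For the converse I would argue by contradiction: assume $b(z) = 0$ but $P^*$ is not lattice, pick a facet $F$ with $p > 1$, and choose a $(d-1)$-simplex $\Omega \in T$ lying on $F$ with $\bW' = \{(\br_1, q), \ldots, (\br_d, q), (\mathbf{0}, 1)\}$. The image $\tilde{\phi}_F(\Z\bW')$ equals $p\Z$, so $\tilde{\phi}_F$ descends to a surjective homomorphism $\Z^{d+1}/\Z\bW' \to \Z/p\Z$ whose fibers all have size $|\det \bW'|/p > 0$; this produces a lattice point $\bv \in \Pi(\bW')$ with $\lambda = 1/p$. Applying the disjoint decomposition $\Pi(\bW') = \biguplus_{S \subseteq \bW'} \boxx{S}$ from the proof of Lemma~\ref{lemma:h-star}, the fact that $\lambda > 0$ forces $\bv \in \boxx{\Omega_S'}$ for some face $\Omega_S := \conv{\br_i/q : (\br_i, q) \in S}$ of $\Omega$, which is itself in $T$. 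This contradicts the assumption $B(\Omega_S';z) = 0$. The main obstacle I anticipate is the equidistribution step, which hinges on carefully verifying the primitivity of $\tilde{\phi}_F$ and the identity $\tilde{\phi}_F(\Z\bW') = p\Z$; both follow from $\gcd(p, q_F) = 1$ combined with primitivity of $\alpha_F$.
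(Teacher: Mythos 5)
Your proof is correct, and its skeleton is the paper's: reduce, via nonnegativity of all summands, to the statement that $b(z)=0$ if and only if $B(\Omega';z)=0$ for every $\Omega\in T$, and then pair a primitive integer facet normal against points of $\boxx{\Omega'}$ --- your functional $\tilde{\phi}_F(x,t)=q_F\langle\alpha_F,x\rangle-pt$ is exactly the paper's normal vector $(\bp,1)$ (resp.\ $(\bp,b)$) written in coordinates, and your forward direction coincides with the paper's. The one genuine divergence is the converse. The paper argues geometrically: primitivity of $(\bp,b)$ with $b>1$ produces an integer point in the open slab $S$ between the hyperplane spanned by $\cone{F}$ and its translate by $(\mathbf{0},1)$, and $S$ is covered by the translates $\sum_i\mu_i(\br_i,q)+\bigcup_{\tau\subseteq\Omega}\boxx{\tau'}$ with $\mu_i\in\Z$, so some $\boxx{\tau'}$ must contain a lattice point. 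You instead work inside the half-open parallelepiped $\Pi(\bW')$: its lattice points represent $\Z^{d+1}/\Z\bW'$, the functional $\tilde{\phi}_F$ descends to a surjection onto $\Z/p\Z$ because $\tilde{\phi}_F$ is primitive (here $\gcd(p,q_F)=1$ and primitivity of $\alpha_F$ are exactly what is needed, as you say) while $\tilde{\phi}_F(\Z\bW')=p\Z$, so some lattice point of $\Pi(\bW')$ has $\lambda=1/p\notin\Z$, and the decomposition $\Pi(\bW')=\biguplus_{S\subseteq\bW'}\boxx{S}$ from the proof of Lemma~\ref{lemma:h-star} places it in $\boxx{\tau'}$ for a face $\tau$ of $\Omega$ (a face of $T$), contradicting $B(\tau';z)=0$. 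The two mechanisms rest on the same two ingredients (primitivity of the facet normal and a box decomposition); yours trades the paper's covering claim for the standard bijection with the quotient lattice and is somewhat more quantitative, while the paper's stays purely geometric and needs no counting. One small point worth making explicit: choosing $\ell=1$ is consistent with the $a/b$-decomposition of Theorem~\ref{h-star decomp}, since the origin being an interior lattice point gives $s=\deg h^*(P;z)=q(d+1)-1$ by Proposition~\ref{smallest dilate}, hence $\ell=q(d+1)-s=1$; the paper glosses this in the same way.
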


\begin{proof}
Let $P$ be a rational polytope containing the origin in its interior.  
Following Set-up~\ref{setup}, we let $T$ be a boundary triangulation of $P$ and we set $(\ba, \ell)=(\mathbf{0},1)$.
Recall that this implies
\[
  b(z)=z^{-1}(1+z+\cdots + z^{q-1})\sum_{\Omega\in T}B(\Omega';z)h(\Omega;z^q)\, .
\]
Thus, $b(z)=0$ if and only if $B(\Omega';z)=0$ for every $\Omega\in T$, which is true if and only if $\boxx{\Omega'}$ contains no integer points for every $\Omega\in T$.
  
To establish the forward direction, assume that the dual of $P$ is a lattice polytope.
We want to show that $b(z)=0$ in the $a/b$-decomposition of $\overline{h^*}(P;z) = h^*(P;z)$. 
Each $\Omega\in T$ is contained in a facet $F$ of $P$. 
Since the dual of $P$ is a lattice polytope, the vector normal to $\cone{F}$ is of the form $(\bp,1)$, where $\bp$ is the vertex of the dual of $P$ corresponding to $F$.
Let $(\br_1,q),\dots ,(\br_{m+1},q)$ be the ray generators of $\boxx{\Omega}$.
If $\sum_{i=1}^{m+1}\lambda_i(r_i,q)\in \boxx{\Omega}$ for $0<\lambda_i<1$, then $(\bp,1)\cdot \left(\sum_{i=1}^{m+1}\lambda_i(r_i,q)\right)=0$.
Also, note that $(\bp,1)\cdot (\mathbf{0},1)=1$, which tells us that $(\mathbf{0},1)$ is at lattice distance $1$ away from $\boxx{\Omega}$ with respect to $(\bp,1)$. 
So, if $$\sum_{i=1}^{m+1}\lambda_i(r_i,q)+\lambda(\mathbf{0},1)\in \boxx{\Omega'} \, $$ then $(\bp,1)\cdot \left[\sum_{i=1}^{m+1}\lambda_i(r_i,q)+\lambda(\mathbf{0},1)\right]=\lambda$, where $0<\lambda<1$. 
This implies that $\sum_{i=1}^{m+1}\lambda_i(r_i,q)+\lambda(\mathbf{0},1)$ is not an integer point, from which it follows that $\boxx{\Omega'}$ contains no lattice points. 
Thus $B(\Omega',z)=0$ and so $b(z)=0$ in the $a/b$-decomposition of $\overline{h^*}(P;z)$. 
Hence, $\overline{h^*}(P;z)= h^*(P;z)=a(z)$ is palindromic.

For the backward direction, assume that $b(z)=0$, and thus for every $\Omega\in T$, the set $\boxx{\Omega'}$ contains no integer points.
Our goal is to use this fact to show that for every facet $F$ of $P$, the vertex of the dual of $P$ corresponding to $F$ is a lattice point, i.e., to show that the primitive facet normal to $\cone{F}$ is given by $(\bp,1)$ for some lattice point $\bp$.
Let $F$ be a facet of $P$, and let $\Omega=\conv{(\br_1,q),\dots ,(\br_{m+1},q)}\in T$ be a full-dimensional simplex contained in $F$.
Since the origin lies in the interior of $P$, the dual of $P$ is a rational polytope containing the origin.
Further, the vector normal to $\cone{F}$ can be written in the form $(\bp,b)$ with $b>0$, where $\bp$ is an integer vector that is primitive, i.e., the greatest common divisor of the entries in $(\bp,b)$ equals $1$. 
Observe that $(\bp,b)\cdot (\mathbf{0},1)=b$.
If $b=1$, then the vertex of the dual of $P$ corresponding to $F$ is a lattice point, and our proof is complete.

Otherwise, suppose that $b>1$.
Since $(\bp,b)$ is primitive, there exists an integer vector $\bv$ such that $(\bp,b)\cdot \bv=1$.
Since $b>1>0$, $\bv$ is an element of the subset $S$ strictly contained between the hyperplane $H_0$ spanned by $\cone{F}$ and the affine hyperplane $H_b=H_0+(\mathbf{0},1)$; we can precisely describe this subset as
\[
  S:=\left\{\sum_{i=1}^{m+1}\lambda_i(\br_i,q)+\lambda(\mathbf{0},1):\lambda_i\in \R \text{ and }0<\lambda<1\right\} .
\]
Since $b(z)=0$, it follows that for each $\tau \subseteq \Omega$ the set $\boxx{\tau'}=\boxx{\tau,(\mathbf{0},1)}$ contains no integer points.
The key observation is that translates of $\bigcup_{\tau \subseteq \Omega}\boxx{\tau,(\mathbf{0},1)}$ by the integer ray generators of $\cone{F}$ cover $S$, though this union is not disjoint, i.e.,
\[
S=\bigcup_{\mu_1\,\ldots,\mu_{m+1}\in \Z}\left( \left(\sum_i\mu_i(\br_i,q)\right) + \bigcup_{\tau \subseteq \Omega}\boxx{\tau,(\mathbf{0},1)} \right) .
\]
This cover property follows from taking an arbitrary $\sum_{i=1}^{m+1}\lambda_i(\br_i,q)+\lambda(\mathbf{0},1)\in S$ and expressing each coefficient as a sum of an integer and fractional part.
It follows that $S$ contains no integer points, since $\bigcup_{\tau \subseteq \Omega}\boxx{\tau,(\mathbf{0},1)}$ contains no integer points.
Hence, no such integer vector $\bv$ exists, implying that $b=1$.
Since $F$ was arbitrary, it follows that the dual of $P$ is a lattice polytope. 
\end{proof}

\subsection{Reflexive Polytopes of Higher Index}
Kasprzyk and Nill~\cite{KasprzykNill} introduced the following class of polytopes .

\begin{definition}
A lattice polytope $P$ is a \emph{reflexive polytope of higher index} $\mathcal{L}$ (also
known as an \emph{$\mathcal{L}$-reflexive polytope}), for some $\mathcal{L}\in\Z_{>0}$, if the following conditions hold:
\begin{itemize}
    \item $P$ contains the origin in its interior;
    \item The vertices of $P$ are primitive, i.e., the line segment joining each vertex to $\mathbf{0}$ contains no other lattice points;
    \item For any facet $F$ of $P$ the local index $\mathcal{L}_F$ equals $\mathcal{L}$, i.e., the integral distance of $\mathbf{0}$ from the affine hyperplane spanned by $F$ equals $\mathcal{L}$. 
\end{itemize}
\end{definition}

The $1$-reflexive polytopes are the reflexive polytopes mentioned earlier in the section.
Kaspryzk and Nill proved that if $P$ is a lattice polytope with primitive vertices containing the origin in its interior then $P$ is $\mathcal{L}$-reflexive if and only if $\mathcal{L}P^*$ is a lattice polytope having only primitive vertices. In this case, $\mathcal{L} P^*$ is also $\mathcal{L}$-reflexive.

Kaspryzk and Nill investigated $\mathcal{L}$-reflexive polygons. 
In particular, they show that there is no $\mathcal{L}$-reflexive polygon of even index.  
Furthermore, they provide a family of $\mathcal{L}$-reflexive polygons arising for each odd index: 
$$P_\mathcal{L}=\conv{\pm(0,1),\pm (\mathcal{L},2), \pm (\mathcal{L},1)}.$$
We are interested in the dual of $P_\mathcal{L}$:
$$P^*_\mathcal{L}=\conv{\pm\left(\frac{1}{\mathcal{L}},0\right),\pm(\frac{2}{\mathcal{L}},-1),\pm\left(\frac{1}{\mathcal{L}},-1\right)}.$$
\vspace{0.25cm}
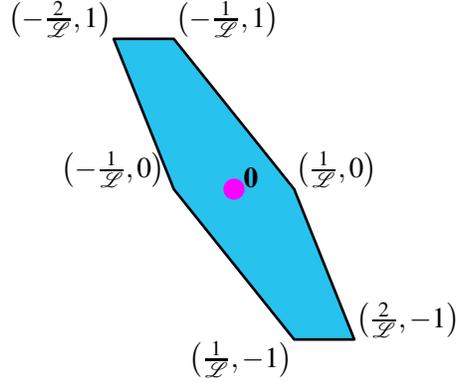
\begin{figure}[H]
\begin{center}

\begin{tikzpicture}[x  = {(2cm,0cm)},
                    y  = {(0cm,1cm)},
                    z  = {(0cm,0cm)},
                    scale = 2,
                    color = {lightgray}]

  \definecolor{pointcolor_p}{rgb}{1,0,1}
  \tikzstyle{pointstyle_p} = [fill=pointcolor_p]

  \coordinate (v0_p) at (0.2, 0);
  \coordinate (v1_p) at (-0.2, 0);
  \coordinate (v2_p) at (0.4, -1);
  \coordinate (v3_p) at (-0.4, 1);
  \coordinate (v4_p) at (0.2, -1);
  \coordinate (v5_p) at (-0.2, 1);
  \coordinate (0,0) at (0,0);

  \definecolor{edgecolor_p}{rgb}{ 0,0,0 }

  \definecolor{facetcolor_p}{rgb}{0.0, 0.72, 0.92}
  \tikzstyle{facestyle_p} = [fill=facetcolor_p, fill opacity=0.85, draw=edgecolor_p, line width=1 pt, line cap=round, line join=round]

  \draw[facestyle_p] (v1_p) -- (v4_p) -- (v2_p) -- (v0_p) -- (v5_p) -- (v3_p) -- (v1_p) -- cycle;

  \fill[pointcolor_p] (v1_p);
  \node at (v1_p) [text=black, inner sep=0.5pt, above left, draw=none, align=left] {$\left(-\frac{1}{\mathcal{L}},0\right)$     };
  \fill[pointcolor_p] (v4_p);
  \node at (v4_p) [text=black, inner sep=0.5pt, below left, draw=none, align=left] {$\left(\frac{1}{\mathcal{L}},-1\right)$};
  \fill[pointcolor_p] (v2_p);
  \node at (v2_p) [text=black, inner sep=0.5pt, above right, draw=none, align=left] {$\left(\frac{2}{\mathcal{L}},-1\right)$};
  \fill[pointcolor_p] (v0_p);
  \node at (v0_p) [text=black, inner sep=0.5pt, above right, draw=none, align=left] {$\left(\frac{1}{\mathcal{L}},0\right)$};
  \fill[pointcolor_p] (v5_p);
  \node at (v5_p) [text=black, inner sep=0.5pt, above right, draw=none, align=left] {$\left(-\frac{1}{\mathcal{L}},1\right)$};
  \fill[pointcolor_p] (v3_p);
  \node at (v3_p) [text=black, inner sep=0.5pt, above left, draw=none, align=left] {$\left(-\frac{2}{\mathcal{L}},1\right)$};
  draw=none, align=left] {5};
  \fill[pointcolor_p] (0,0) circle (2 pt);
  \node at (0,0) [text=black, inner sep=0.5pt, above right, draw=none, align=left] {\phantom{t}$\mathbf{0}$};


\end{tikzpicture}
\end{center}
\caption{The rational hexagon $P^*_\mathcal{L}$.}
\label{fig:hexagon} 
\end{figure}

Let $\mathcal{L}$ be odd. 
Our goal in the remainder of this subsection is to compute the $h^*$-polynomial of $P^*_\mathcal{L}$ using Theorem~\ref{h^*-polynomial}, to illustrate how this theorem can be applied.
Consider the boundary as its own triangulation $T$ (with denominator $\mathcal{L}$) of $P^*_\mathcal{L}$ and take the set of integral ray generators of $\cone{P^*_\mathcal{L}}$ to be $$\{\pm(1,0,\mathcal{L}),\pm(2,-\mathcal{L},\mathcal{L}),\pm(1,-\mathcal{L},\mathcal{L})\}.$$
Observe that $T$ contains six edges, six vertices, and the empty face $\emptyset$. 
It is not difficult to see that the box polynomials of the $0$-simplices are $0$. For example, in order for
\[ \boxx{(2,-\mathcal{L},\mathcal{L})}=\{\lambda_1(2,-\mathcal{L},\mathcal{L}):
0<\lambda_1<1\}\cap \Z^3 \]
to contain any lattice points, $2\lambda_1$ must be an integer between $0$ and $2$, implying that $\lambda_1=\frac{1}{2}$.
Also, $-\mathcal{L}\lambda_1$ and $\mathcal{L}\lambda_1$ must be integers, but since $\lambda_1=\frac{1}{2}$ and $\mathcal{L}$ is odd, $-\mathcal{L}\lambda_1$ and $\mathcal{L}\lambda_1$ are never integers. 
Therefore, $\boxx{(2,-\mathcal{L},\mathcal{L})}\cap \Z^3=\emptyset$.

Since $P^*_\mathcal{L}$ is a centrally symmetric hexagon, we can restrict our analysis to three of its facets: $F_1:=\conv{\pm\left(\frac{1}{\mathcal{L}},-1\right),\pm(\frac{2}{\mathcal{L}},-1)}$, $F_2:=\conv{\pm(\frac{2}{\mathcal{L}},-1), \pm\left(\frac{1}{\mathcal{L}},0\right)}$, and $F_3:=\conv{\pm\left(\frac{1}{\mathcal{L}},0\right), \pm\left(-\frac{1}{\mathcal{L}},1\right)}$.
We consider each facet separately.

\textbf{Case: $F_1$.}
Observe:
\begin{align*}
\boxx{(F_1,\mathcal{L})}&=\{\lambda_1(1,-\mathcal{L},\mathcal{L})+\lambda_2(2,-\mathcal{L},\mathcal{L}): 0<\lambda_1,\lambda_2<1\}\\
&=\{(\lambda_1+2\lambda_2, -\mathcal{L}\lambda_1-\mathcal{L}\lambda_2, \mathcal{L}\lambda_1+\mathcal{L}\lambda_2: 0<\lambda_1,\lambda_2<1\}.
\end{align*}

Let $\mathcal{L}=2k+1$ for $k\in \Z_{\geq 0}$. 
We now want to determine when $(A,-B,B)\in \boxx{(F_1,\mathcal{L})}$ is a lattice point. 
This reduces to solving a system of linear equations between $A$ and $B$. 
In order for $A$ to be an integer it must be $1$ or $2$. 
When $A=\lambda_1+2\lambda_2=1$, $B=\mathcal{L}\lambda_1+\mathcal{L}\lambda_2$ equals $\mathcal{L}-k$, $\mathcal{L}-k+1$,\dots, $\mathcal{L}-2$, or $\mathcal{L}-1$ with the restriction that $0<\lambda_1,\lambda_2<1$.  
When $A=\lambda_1+2\lambda_2=2$, $B=\mathcal{L}\lambda_1+\mathcal{L}\lambda_2$ equals $\mathcal{L}+1$, $\mathcal{L}+2$,\dots, $\mathcal{L}+k-1$, or $\mathcal{L}+k$. 
Therefore, $\boxx{(F_1,\mathcal{L})}\cap \Z^3$ contains the elements $\{(1,k-\mathcal{L},\mathcal{L}-k),(1,k-\mathcal{L}-1,\mathcal{L}-k+1),\dots(1,2-\mathcal{L},\mathcal{L}-2),(1,1-\mathcal{L},\mathcal{L}-1),(2,-\mathcal{L}-1,\mathcal{L}+1),(2,-\mathcal{L}-2,\mathcal{L}+2),\dots,(2,1-\mathcal{L}-k,\mathcal{L}+k+1),(2,-\mathcal{L}-k,\mathcal{L}+k)\}$.
Therefore, the box polynomial of $F_1$ is
$$
B(F_1;z)=\sum_{i=\mathcal{L}-k}^{\mathcal{L}-1}z^i+\sum_{i=\mathcal{L}+1}^{\mathcal{L}+k}z^i \, .
$$
\newpage
\textbf{Case: $F_2$.}
Observe:

\begin{align*}
\boxx{F_2,\mathcal{L}}&=\{\lambda_1(2,-\mathcal{L},\mathcal{L})+\lambda_2(1,0,\mathcal{L}):0<\lambda_1,\lambda_2<1\}\\
&=\{(2\lambda_1+\lambda_2, -\mathcal{L}\lambda_1,\mathcal{L}\lambda_1+\mathcal{L}\lambda_2):0<\lambda_1,\lambda_2<1\}.
\end{align*}
Suppose $(A,B,C)$ is an integer point in this set.
Again, determining the integer points in the box reduces to solving a system of linear equations between $A$ and $C$ with the added condition coming from $B$ that $\lambda_1=\frac{1}{\mathcal{L}},\dots, \frac{\mathcal{L}-1}{\mathcal{L}}$.  
It is straightforward to verify that the resulting box polynomial of $F_2$ is the same as $F_1$.

\textbf{Case: $F_3$.}
Observe:
\begin{align*}
\boxx{F_3,\mathcal{L}}&=\{\lambda_1(-1,\mathcal{L},\mathcal{L})+\lambda_2(1,0,\mathcal{L}):0<\lambda_1,\lambda_2<1\}\\
&=\{(-\lambda_1+\lambda_2,\mathcal{L}\lambda_1, \mathcal{L}\lambda_1+\mathcal{L}\lambda_2):0<\lambda_1,\lambda_2<1\}.
\end{align*}

Suppose $(A,B,C)$ is an integer point in this set.
For $A$ to be an integer it must be equal to zero, so we obtain $\lambda_1=\lambda_2$. 
The expression for $B$ implies that $\lambda_1=\frac{m}{\mathcal{L}}$ for some integer $m\in [1,\mathcal{L}-1]$.
Lastly, $C$ then reduces to $2\mathcal{L}\lambda_1=2m$.
Therefore, we conclude $\boxx{(F_3,\mathcal{L})}$ contains $\mathcal{L}-1$ lattice points of the form $(0,m,2m)$, one for each integer $m\in [1,\mathcal{L}-1]$. 
This implies the box polynomial of $F_3$ is given by
$$
B(F_3;z)=\sum_{i=1}^{\mathcal{L}-1}z^{2i}\, .
$$

 \begin{table}
    \centering
 \begin{tabular}{||c c c c||} 
 \hline
 $\Omega\in T$ & $\dim(\Omega)$ & $B(\Omega;z)$ & $h(\Omega,z^\mathcal{L})$ \\ [0.5ex] 
 \hline\hline
 $F_1$ & 1 & $\sum_{i=\mathcal{L}-k}^{\mathcal{L}-1}z^i+\sum_{i=\mathcal{L}+1}^{\mathcal{L}+k}z^i$ & 1 \\ 
 \hline
 $-F_1$ & 1 & $\sum_{i=\mathcal{L}-k}^{\mathcal{L}-1}z^i+\sum_{i=\mathcal{L}+1}^{\mathcal{L}+k}z^i$ & 1 \\
 \hline
 $F_2$ & 1 & $\sum_{i=\mathcal{L}-k}^{\mathcal{L}-1}z^i+\sum_{i=\mathcal{L}+1}^{\mathcal{L}+k}z^i$ & 1 \\
 \hline
 $-F_2$ & 1 & $\sum_{i=\mathcal{L}-k}^{\mathcal{L}-1}z^i+\sum_{i=\mathcal{L}+1}^{\mathcal{L}+k}z^i$ & 1 \\
 \hline
 $F_3$ & 1 & $\sum_{i=1}^{\mathcal{L}-1}z^{2i}$ & 1 \\ 
 \hline
 $-F_3$ & 1 & $\sum_{i=1}^{\mathcal{L}-1}z^{2i}$ & 1 \\ 
 \hline
 $\left(\frac{1}{\mathcal{L}},0\right)$ & $0$ & $0$ & $1+z^\mathcal{L}$ \\ 
   \hline
  $\left(-\frac{1}{\mathcal{L}},0\right)$& $0$ & $0$ & $1+z^\mathcal{L}$ \\ 
   \hline
 $\left(\frac{2}{\mathcal{L}},-1\right)$ & $0$ & $0$ & $1+z^\mathcal{L}$ \\ 
   \hline
 $\left(-\frac{2}{\mathcal{L}},1\right)$ & $0$ & $0$ & $1+z^\mathcal{L}$ \\ 
   \hline
 $\left(\frac{1}{\mathcal{L}},-1\right)$ & $0$ & $0$ & $1+z^\mathcal{L}$ \\ 
   \hline
 $\left(-\frac{1}{\mathcal{L}},1\right)$ & $0$ & $0$ & $1+z^\mathcal{L}$ \\ 
   \hline
 $\emptyset$ & -1 & 1 & $1+4z^{\mathcal{L}}+z^{2\mathcal{L}}$ \\ [1ex] 
 \hline
 \end{tabular}
 \label{tab:table}
 \caption{}
\end{table}

Combining the above analysis with the values in Table~\ref{tab:table}, we apply Theorems~\ref{h^*-polynomial} and~\ref{prop: dual} and conclude that for $\mathcal{L}=2k+1$,
$$h^*(P^*_\mathcal{L};z)=(1+z+\cdots+z^{\mathcal{L}})\left(1+4z^\mathcal{L}+z^{2\mathcal{L}}+4\left(\sum_{i=\mathcal{L}-k}^{\mathcal{L}-1}z^i+\sum_{i=\mathcal{L}+1}^{\mathcal{L}+k}z^i\right)+2\sum_{i=1}^{\mathcal{L}-1}z^{2i}\right).$$

\vspace{5mm}
\section*{Acknowledgements}
This work was partially supported by NSF Graduate Research Fellowship DGE-1247392 (ARVM).  
ARVM thanks the Discrete Geometry group of the Mathematics Institute at FU Berlin for providing a wonderful working environment while part of this work was done.
The authors would like to thank Steven Klee, Jos\'e Samper, Liam Solus, and three anonymous
referees for fruitful correspondence. 


\bibliographystyle{amsplain}
\bibliography{VindasMelendez}

\end{document}